\begin{document}

\title{On the Henstock--Kurzweil Integral for Riesz-space-valued Functions on Time Scales}

\author[a1,a2]{Xuexiao You}
\ead{youxuexiao@126.com}

\author[a1,a3]{Dafang Zhao}
\ead{dafangzhao@163.com}

\author[a4]{Delfim F. M. Torres\corref{c1}}
\ead{delfim@ua.pt}


\address[a1]{School of Mathematics and Statistics,
Hubei Normal University, Huangshi,
Hubei 435002, P. R. China
\vskip 0.1cm}

\address[a2]{College of Computer and Information,
Hohai University, Nanjing, Jiangsu 210098,
P. R. China
\vskip 0.1cm}

\address[a3]{College of Science,
Hohai University, Nanjing, Jiangsu 210098,
P. R. China
\vskip 0.1cm}

\address[a4]{Center for Research and Development
in Mathematics and Applications (CIDMA),\\
Department of Mathematics, University of Aveiro,
3810-193 Aveiro, Portugal.}


\cortext[c1]{Corresponding author}
\setcounter{page}{1}

\recivedat{\ }


\authors{X. You, D. Zhao, D. F. M. Torres}


\begin{abstract}
We introduce and investigate the Henstock--Kurzweil (HK)
integral for Riesz-space-valued functions on time scales.
Some basic properties of the HK delta integral
for Riesz-space-valued functions are proved.
Further, we prove uniform and monotone
convergence theorems. 
\begin{keyword}Henstock--Kurzweil integral
\sep Riesz space
\sep time scales.
\MSC{28B05 \sep 28B10 \sep 28B15 \sep 46G10.}
\end{keyword}
\end{abstract}


\maketitle

\newtheorem{theorem}{Theorem}[section]
\newtheorem{lemma}[theorem]{Lemma}
\newtheorem{proposition}[theorem]{Proposition}
\newtheorem{corollary}[theorem]{Corollary}
\newtheorem{question}[theorem]{Question}

\theoremstyle{definition}
\newtheorem{definition}[theorem]{Definition}
\newtheorem{algorithm}[theorem]{Algorithm}
\newtheorem{conclusion}[theorem]{Conclusion}
\newtheorem{problem}[theorem]{Problem}

\theoremstyle{remark}
\newtheorem{remark}[theorem]{Remark}
\numberwithin{equation}{section}


\section{Introduction}

It is well known that the Henstock--Kurzweil integral integrates
highly oscillating functions and encompasses Newton, Riemann and
Lebesgue integrals. This integral was introduced by Kurzweil
and Henstock independently in 1957/58 \cite{H1,K1}. It has been shown
that the Henstock--Kurzweil integral is equivalent to the Denjoy--Perron integral.
For fundamental results and some applications in the theory of Henstock--Kurzweil integration,
we refer the reader to the papers \cite{BPM,BLMMP,C1,PM,H,HS,H1,S1,S2,Y,ZY}
and monographs \cite{B0,G,H2,H3,KS,K2,K3,L,L1,L2,LV,P,S,SY}.

A time scale $\mathbb{T}$ is an arbitrary nonempty closed subset of real numbers
$\mathbb{R}$ with the subspace topology inherited from the standard topology
of $\mathbb{R}$. The theory of time scales was born in 1988
with the Ph.D. thesis of Hilger \cite{H4}. The aim of this theory
is to unify various definitions and results
from the theories of discrete and continuous dynamical systems,
and to extend such theories to more general classes of dynamical systems.
It has been extensively studied on various aspects by several
authors; see, e.g., \cite{BCT2,BCT1,BL,BP1,BP2,BS,G1,OTT}.
In \cite{PT}, Peterson and Thompson introduced a more general
concept of integral on time scales, i.e., the Henstock--Kurzweil delta integral,
which contains the Riemann delta and the Lebesgue delta integrals as special cases.
The theory of Henstock--Kurzweil integration for real-valued and vector-valued
functions on time scales has developed rather intensively in the past few years;
see, for instance, the papers \cite{A,C,FMS,MS2,MS1,ST1,N2,N1,S3,T,Y1} and the
references cited therein.

One of the interesting points of integration theories is the problem
when functions with values in general spaces have to be integrated.
The Henstock--Kurzweil integral for Riesz-space-valued functions
was investigated in \cite{B1,B2,BCR,BCS,BR1,BR2,BR4,BRV,BS1,BS2,BS3,BST,R1,R3,R4,R5,R6,R7}.
Surprisingly enough, the Henstock--Kurzweil integral for Riesz-space-valued functions
has not received attention in the literature of time scales. The main goal of this paper
is to generalize the results above by constructing the Henstock--Kurzweil integral
for Riesz-space-valued functions on time scales.

The paper is organized as follows.
Section~\ref{sec:2} contains basic concepts of Riesz space,
time scales and Henstock--Kurzweil integral. In Section~\ref{sec:3},
the definition of Henstock--Kurzweil delta integral for Riesz-space-valued
functions is introduced, and the basic properties of the Henstock--Kurzweil
delta integral for Riesz-space-valued functions are investigated.
In Section~\ref{sec:4}, we prove a uniformly convergence
theorem and a monotone convergence theorem for the Henstock--Kurzweil
delta integral for Riesz-space-valued functions.


\section{Preliminaries}
\label{sec:2}

The following conventions and notations will be used, unless stated otherwise.
Let $\mathbb{N}$, $\mathbb{R}$, and $\mathbb{R}^{+}$ be the sets of all natural,
real and positive real numbers, respectively, and let $X$ be a Riesz space.
A decreasing sequence $(b_{n})_{n}$ in $X$, such that $\bigwedge_{n}b_{n}=0$,
is called an $(o)$-sequence. A bounded double sequence $(a_{i,j})_{i,j}$
in $X$ is a $(D)$-sequence or a regulator if $(a_{i,j})_{i,j}$ is an $(o)$-sequence
for all $i\in \mathbb{N}$. A Riesz space $X$ is said to be Dedekind complete
if every nonempty subset $X_{1}$ of $X$, bounded from above, has a lattice
supremum in $X$ denoted by $\bigvee X_{1}$. A Dedekind complete Riesz space $X$
is said to be weakly $\sigma$-distributive if for every $(D)$-sequence
$(a_{i,j})_{i,j}$ in $X$ one has:
$$
\bigwedge_{\varphi \in\mathbb{N}^{\mathbb{N}}}\Big(\bigvee_{i=1}^{\infty}a_{i,\varphi(i)}\Big)=0.
$$
Let $\mathbb{T}$ be a time scale, i.e., a nonempty closed subset of $\mathbb{R}$.
For $a,b\in \mathbb{T}$, we define the closed interval
$[a,b]_{\mathbb{T}}$ by $[a,b]_{\mathbb{T}}=\{t\in\mathbb{T}: a \leq t \leq b\}$.
The open and half-open intervals are defined in an similar way.
For $t\in \mathbb{T}$, we define the forward jump operator $\sigma$
by $\sigma(t)=\inf\{s>t: s\in \mathbb{T}\}$, where
$\inf\emptyset=\sup\{\mathbb{T}\}$, while the
backward jump operator $\rho$ is defined by
$\rho(t)=\sup\{s<t: s\in \mathbb{T}\}$,
where $\sup\emptyset=\inf\{\mathbb{T}\}$.

If $\sigma(t)>t$, then we say that $t$ is right-scattered, while if
$\rho(t)<t$, then we say that $t$ is left-scattered. If $\sigma(t)=t$,
then we say that $t$ is right-dense, while if $\rho(t)=t$, then
we say that $t$ is left-dense. A point $t\in \mathbb{T}$ is dense
if it is right and left dense; isolated, if it is right and left scattered.
The forward graininess function $\mu(t)$ and the backward graininess function
$\eta(t)$ are defined by $\mu(t)=\sigma(t)-t$ and $\eta(t)=t-\rho(t)$
for all $t\in \mathbb{T}$, respectively. If $\sup\mathbb{T}$ is finite
and left-scattered, then we define $\mathbb{T}^{\kappa}:=\mathbb{T}\backslash \sup\mathbb{T}$,
otherwise $\mathbb{T}^{\kappa}:=\mathbb{T}$; if $\inf\mathbb{T}$ is finite and
right-scattered, then $\mathbb{T}_{\kappa}:=\mathbb{T}\backslash
\inf\mathbb{T}$, otherwise $\mathbb{T}_{\kappa}:=\mathbb{T}$. We set
$\mathbb{T}^{\kappa}_{\kappa}:=\mathbb{T}^{\kappa}\bigcap \mathbb{T}_{\kappa}$.

Throughout this paper, all considered intervals will be intervals in
$\mathbb{T}$. A partition $\mathcal{D}$ of $[a,b]_{\mathbb{T}}$ is a
finite collection of interval-point pairs
${\{([t_{i-1},t_{i}]_{\mathbb{T}},\xi_{i})\}}^{n}_{i=1}$, where
$$
\{a=t_{0}<t_{1}<\cdots <t_{n-1}<t_{n}=b\}
$$
and $\xi_{i}\in[a,b]_{\mathbb{T}}$ for $i=1,2,\cdots,n$.
By $\Delta t_{i}=t_{i}-t_{i-1}$ we denote the length
of the $i$th subinterval in the partition $\mathcal{D}$.
We say that $\delta(\xi)=(\delta_{L}(\xi),\delta_{R}(\xi))$
is $\Delta$-gauge for $[a,b]_{\mathbb{T}}$ provided $\delta_{L}(\xi)>0$
on $(a,b]_{\mathbb{T}}$, $\delta_{R}(\xi)>0$ on $[a,b)_{\mathbb{T}}$,
$\delta_{L}(a)\geq 0,\delta_{R}(b)\geq 0$ and $\delta_{R}(\xi)\geq
\mu(\xi)$ for all $\xi\in [a,b)_{\mathbb{T}}$.
Let $\delta^{1}(\xi), \delta^{2}(\xi)$ be $\Delta$-gauges for
$[a,b]_{\mathbb{T}}$ such that $0< \delta^{1}_{L}(\xi)< \delta^{2}_{L}(\xi)$
for all $\xi\in (a,b]_{\mathbb{T}}$ and
$0< \delta^{1}_{R}(\xi)< \delta^{2}_{R}(\xi)$
for all $\xi\in [a,b)_{\mathbb{T}}$. We say $\delta^{1}(\xi)$
is finer than $\delta^{2}(\xi)$ and write
$\delta^{1}(\xi)<\delta^{2}(\xi)$. We say that
$\mathcal{D}={\{([t_{i-1},t_{i}]_{\mathbb{T}},\xi_{i})\}}^{n}_{i=1}$ is
\begin{itemize}
\item[$(1)$] a partial partition of $[a,b]_{\mathbb{T}}$
if $\bigcup^{n}_{i=1}[t_{i-1},t_{i}]_{\mathbb{T}}\subset [a,b]_{\mathbb{T}}$;
\item[$(2)$] a partition of $[a,b]_{\mathbb{T}}$
if $\bigcup^{n}_{i=1}[t_{i-1},t_{i}]_{\mathbb{T}}=[a,b]_{\mathbb{T}}$;
\item[$(3)$] a $\delta$-fine Henstock--Kurzweil (HK) partition of
$[a,b]_{\mathbb{T}}$ if
$\xi_{i}\in[t_{i-1},t_{i}]_{\mathbb{T}}\subset(\xi_{i}-\delta_{L}(\xi_{i}),
\xi_{i}+\delta_{R}(\xi_{i}))_{\mathbb{T}}$ for all $ i=1,2,\ldots, n$.
\end{itemize}
Given a $\delta$-fine HK partition $\mathcal{D}={\{([t_{i-1},t_{i}]_{\mathbb{T}},
\xi_{i})\}}^{n}_{i=1}$ of $[a,b]_{\mathbb{T}}$, we write
$$
S(f,\mathcal{D},\delta)=\sum^{n}_{i=1}f(\xi_{i})(t_{i}-t_{i-1})
$$
for integral sums over $D$, whenever $f:[a,b]_{\mathbb{T}}\rightarrow X$.
In what follows, we shall always assume that $X$ is a Dedekind complete
weakly $\sigma$-distributive Riesz space.


\section{The Henstock--Kurzweil delta integral for Riesz-space-valued functions}
\label{sec:3}

Before formulating and giving the proof of our first result,
we need the following definition.

\begin{definition}
\label{def1}
A function $f:[a,b]_{\mathbb{T}}\rightarrow X$ is called Henstock--Kurzweil
delta integrable (HK $\Delta$-integrable) on $[a,b]_{\mathbb{T}}$,
if there exists $x\in X$ and a $(D)$-sequence $(a_{i,j})_{i,j}$
of elements of $X$ such that for every $\varphi \in\mathbb{N}^{\mathbb{N}}$
there exists a $\Delta$-gauge, $\delta$, for $[a,b]_{\mathbb{T}}$, such that
$$
\left|S(f,\mathcal{D},\delta)-x\right|
<\bigvee_{i=1}^{\infty}a_{i,\varphi(i)}
$$
for each $\delta$-fine HK partition
$\mathcal{D}={\{([t_{i-1},t_{i}]_{\mathbb{T}},\xi_{i})\}}^{n}_{i=1}$
of $[a,b]_{\mathbb{T}}$. In this case, $x$ is called the HK $\Delta$-integral
of $f$ on $[a,b]_{\mathbb{T}}$ and is denoted by $x=\int_{a}^{b}f(t)\Delta t$.
\end{definition}

\begin{theorem}
\label{th1}
If $f:[a,b]_{\mathbb{T}}\rightarrow X$ is HK $\Delta$-integrable on $[a,b]_{\mathbb{T}}$,
then the integral of $f$ is determined uniquely.
\end{theorem}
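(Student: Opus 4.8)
The plan is to show that if both $x$ and $y$ serve as HK $\Delta$-integrals of $f$, then $|x-y|=0$, which forces $x=y$ since $X$ is a Riesz space. The standard strategy is to bound $|x-y|$ by the sum of two integral-sum discrepancies via the triangle inequality, apply the integrability hypothesis to each with a common $\delta$-fine partition, and then exploit weak $\sigma$-distributivity to conclude that the resulting bound is actually zero.

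**The key steps, in order, are as follows.** Suppose $f$ is HK $\Delta$-integrable with integral values $x$ and $y$, witnessed by $(D)$-sequences $(a_{i,j})_{i,j}$ and $(b_{i,j})_{i,j}$ respectively. Fix an arbitrary $\varphi\in\mathbb{N}^{\mathbb{N}}$. By Definition~\ref{def1} applied to $x$, there is a $\Delta$-gauge $\delta^{1}$ so that $|S(f,\mathcal{D},\delta^{1})-x|<\bigvee_{i=1}^{\infty}a_{i,\varphi(i)}$ for every $\delta^{1}$-fine HK partition; similarly, applied to $y$, there is a $\Delta$-gauge $\delta^{2}$ with $|S(f,\mathcal{D},\delta^{2})-y|<\bigvee_{i=1}^{\infty}b_{i,\varphi(i)}$. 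The crucial point is that I need a single partition $\mathcal{D}$ that is simultaneously fine enough for both gauges. Taking the componentwise minimum $\delta(\xi)=(\min\{\delta^{1}_{L}(\xi),\delta^{2}_{L}(\xi)\},\min\{\delta^{1}_{R}(\xi),\delta^{2}_{R}(\xi)\})$ produces a gauge finer than both, and any $\delta$-fine partition is then $\delta^{1}$-fine and $\delta^{2}$-fine at once. Here I must also check that $\delta$ remains a legitimate $\Delta$-gauge, in particular that $\delta_{R}(\xi)\geq\mu(\xi)$ is preserved under the minimum (it is, since each of $\delta^{1}_{R},\delta^{2}_{R}$ dominates $\mu$). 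The existence of at least one such $\delta$-fine partition is guaranteed by the time-scale Cousin-type lemma.

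**Having fixed such a $\mathcal{D}$,** the triangle inequality in $X$ gives
$$
|x-y|\leq |x-S(f,\mathcal{D},\delta)|+|S(f,\mathcal{D},\delta)-y|<\bigvee_{i=1}^{\infty}a_{i,\varphi(i)}+\bigvee_{i=1}^{\infty}b_{i,\varphi(i)}.
$$
Writing $c_{i,j}=a_{i,j}+b_{i,j}$ yields a new $(D)$-sequence $(c_{i,j})_{i,j}$ (the sum of two regulators is a regulator, each row remaining an $(o)$-sequence), and since $\bigvee$ is subadditive we obtain $|x-y|\leq\bigvee_{i=1}^{\infty}c_{i,\varphi(i)}$. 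Because $\varphi\in\mathbb{N}^{\mathbb{N}}$ was arbitrary and $|x-y|$ does not depend on $\varphi$, taking the infimum over all $\varphi$ gives $|x-y|\leq\bigwedge_{\varphi\in\mathbb{N}^{\mathbb{N}}}\bigl(\bigvee_{i=1}^{\infty}c_{i,\varphi(i)}\bigr)=0$, where the final equality is precisely the weak $\sigma$-distributivity of $X$ applied to the regulator $(c_{i,j})_{i,j}$.

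**The main obstacle** is the honest handling of the two-gauge issue: the hypothesis only supplies, for each $\varphi$, gauges tailored separately to $x$ and to $y$, so I must merge them into one common refinement and verify it is still an admissible $\Delta$-gauge before I am entitled to evaluate a single Riemann-type sum against both estimates. Once that refinement is in place, the rest is a routine triangle-inequality bound closed off by the defining property of weak $\sigma$-distributivity. A minor subtlety worth stating explicitly is why $|x-y|\leq\bigwedge_\varphi(\cdots)=0$ implies $x=y$: in a Riesz space $|x-y|\geq 0$ always, so $|x-y|=0$, and the lattice axioms then force $x=y$.
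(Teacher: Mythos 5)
Your overall strategy is exactly the paper's: merge the two gauges by a componentwise minimum, bound $|x-y|$ by two integral-sum discrepancies evaluated at one common $\delta$-fine partition, and finish with weak $\sigma$-distributivity; your extra care about the minimum still being a $\Delta$-gauge and about the existence of a $\delta$-fine partition is correct and even more complete than the paper. However, one step is genuinely wrong: the construction of the combined regulator. You set $c_{i,j}=a_{i,j}+b_{i,j}$ and claim that ``since $\bigvee$ is subadditive'' you get $|x-y|\leq\bigvee_{i=1}^{\infty}c_{i,\varphi(i)}$. Subadditivity gives the \emph{opposite} inequality,
$$
\bigvee_{i=1}^{\infty}\left(a_{i,\varphi(i)}+b_{i,\varphi(i)}\right)
\ \leq\ \bigvee_{i=1}^{\infty}a_{i,\varphi(i)}+\bigvee_{i=1}^{\infty}b_{i,\varphi(i)},
$$
so from $|x-y|<\bigvee_{i}a_{i,\varphi(i)}+\bigvee_{i}b_{i,\varphi(i)}$ you cannot pass to $|x-y|\leq\bigvee_{i}c_{i,\varphi(i)}$. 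The inequality you would need can actually fail: in $X=\mathbb{R}$ let $a_{i,j}$ equal $1/j$ in row $i=1$ and $0$ in all other rows, and let $b_{i,j}$ equal $1/j$ in row $i=2$ and $0$ elsewhere; then $\bigvee_{i}a_{i,\varphi(i)}+\bigvee_{i}b_{i,\varphi(i)}=1/\varphi(1)+1/\varphi(2)$, whereas $\bigvee_{i}(a_{i,\varphi(i)}+b_{i,\varphi(i)})=\max\{1/\varphi(1),1/\varphi(2)\}$, which is strictly smaller. So the chain of inequalities breaks exactly at the point where the single regulator is supposed to take over.

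The paper sidesteps this by simply \emph{postulating} a $(D)$-sequence $(c_{i,j})_{i,j}$ with $\bigvee_{i}a_{i,\varphi(i)}+\bigvee_{i}b_{i,\varphi(i)}\leq\bigvee_{i}c_{i,\varphi(i)}$ for every $\varphi$ --- a standard combination lemma for regulators, whose proof interleaves rows (with a factor $2$) rather than adding them entrywise. Your gap can also be repaired with no combination lemma at all: apply the integrability of $x$ with a map $\varphi$ and that of $y$ with an \emph{independent} map $\psi$, merge the resulting gauges, and conclude $|x-y|\leq\bigvee_{i}a_{i,\varphi(i)}+\bigvee_{i}b_{i,\psi(i)}$ for all $\varphi,\psi\in\mathbb{N}^{\mathbb{N}}$. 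Fixing $\psi$ and taking the infimum over $\varphi$ (infima commute with translation by a fixed element in a Riesz space) yields $|x-y|\leq\bigvee_{i}b_{i,\psi(i)}$ by weak $\sigma$-distributivity applied to $(a_{i,j})_{i,j}$; then the infimum over $\psi$ gives $|x-y|\leq 0$, hence $x=y$. With either repair the rest of your argument stands as written.
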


\begin{proof}
Suppose there exist $x_{1},\ x_{2}\in X$ and $(D)$-sequences $(a_{i,j})_{i,j}, (b_{i,j})_{i,j}$
of elements of $X$ and for every $\varphi \in\mathbb{N}^{\mathbb{N}}$ there exist
two $\Delta$-gauges, $\delta_{1}, \delta_{2}$, for $[a,b]_{\mathbb{T}}$, such that
$$
\left|S(f,\mathcal{D}_{1},\delta_{1})-x_{1}\right|
<\bigvee_{i=1}^{\infty}a_{i,\varphi(i)},
\quad
\left|S(f,\mathcal{D}_{2},\delta_{2})-x_{2}\right|
<\bigvee_{i=1}^{\infty}b_{i,\varphi(i)}
$$
for each $\delta_{1}$-fine HK partition
$\mathcal{D}_{1}$ and $\delta_{2}$-fine HK partition
$\mathcal{D}_{2}$, respectively. Let $\delta=\min\{\delta_{1}, \delta_{2}\}$
and $(c_{i,j})_{i,j}$ be a $(D)$-sequence of elements of $X$ such that
$$
\bigvee_{i=1}^{\infty}a_{i,\varphi(i)}+\bigvee_{i=1}^{\infty}
b_{i,\varphi(i)}\leq \bigvee_{i=1}^{\infty}c_{i,\varphi(i)}
$$
for every $\varphi \in\mathbb{N}^{\mathbb{N}}$. Then,
$$
\left|x_{1}-x_{2}|\leq|S(f,\mathcal{D},\delta)-x_{1}\right|
+\left|S(f,\mathcal{D},\delta)-x_{2}\right|
<\bigvee_{i=1}^{\infty}a_{i,\varphi(i)}
+\bigvee_{i=1}^{\infty}b_{i,\varphi(i)}
\leq \bigvee_{i=1}^{\infty}c_{i,\varphi(i)}
$$
for each $\delta$-fine HK partition $\mathcal{D}$.
Since $X$ is weak $\sigma$-distributive, we obtain that
$$
\left|x_{1}-x_{2}\right|
\leq\bigwedge_{\varphi \in\mathbb{N}^{\mathbb{N}}}
\left(\bigvee_{i=1}^{\infty}c_{i,\varphi(i)}\right)=0.
$$
The proof is complete.
\end{proof}

\begin{theorem}
\label{th2}
If $f, g:[a,b]_{\mathbb{T}}\rightarrow X$ are HK $\Delta$-integrable
on $[a,b]_{\mathbb{T}}$ and $\alpha,\beta\in \mathbb{R}$, then
$\alpha f+\beta g$ is HK $\Delta$-integrable on $[a,b]_{\mathbb{T}}$
and
$$
\int_{a}^{b}(\alpha f(t)+\beta g(t))\Delta t
=\alpha\int_{a}^{b}f(t)\Delta t+\beta\int_{a}^{b}g(t)\Delta t.
$$
\end{theorem}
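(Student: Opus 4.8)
The plan is to prove linearity by establishing it first for the scalar multiple $\alpha f$ and the sum $f+g$ separately, or equivalently to handle $\alpha f + \beta g$ in one combined argument by controlling the Riemann sums and their $(D)$-sequence error bounds simultaneously. The key observation is that the integral sum operator is linear in the integrand: for any partition $\mathcal{D}$ and gauge, $S(\alpha f + \beta g, \mathcal{D}, \delta) = \alpha S(f, \mathcal{D}, \delta) + \beta S(g, \mathcal{D}, \delta)$, which follows directly from the definition of $S$ as a finite sum. So the whole proof reduces to bounding the deviation of this combined sum from $\alpha\int_a^b f \,\Delta t + \beta\int_a^b g\,\Delta t$ using the Riesz-space error estimates.

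First I would fix an arbitrary $\varphi \in \mathbb{N}^{\mathbb{N}}$. Since $f$ and $g$ are HK $\Delta$-integrable, there exist $(D)$-sequences $(a_{i,j})_{i,j}$ and $(b_{i,j})_{i,j}$ and $\Delta$-gauges $\delta_1, \delta_2$ (depending on $\varphi$) such that the respective Riemann sums approximate $x_1 = \int_a^b f\,\Delta t$ and $x_2 = \int_a^b g\,\Delta t$ within $\bigvee_{i=1}^{\infty} a_{i,\varphi(i)}$ and $\bigvee_{i=1}^{\infty} b_{i,\varphi(i)}$. I would then set $\delta = \min\{\delta_1, \delta_2\}$, which is again a $\Delta$-gauge for $[a,b]_{\mathbb{T}}$, so that any $\delta$-fine HK partition is simultaneously $\delta_1$-fine and $\delta_2$-fine and both estimates apply at once. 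The triangle inequality in the Riesz space gives
$$
\left|S(\alpha f + \beta g, \mathcal{D}, \delta) - (\alpha x_1 + \beta x_2)\right|
\leq |\alpha|\left|S(f,\mathcal{D},\delta) - x_1\right|
+ |\beta|\left|S(g,\mathcal{D},\delta) - x_2\right|.
$$

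The main obstacle, and the step requiring the most care, is manufacturing a single $(D)$-sequence that dominates the right-hand side above so that the definition of integrability is literally satisfied for $\alpha f + \beta g$. I expect to invoke the same device used in the proof of Theorem~\ref{th1}: there exists a $(D)$-sequence $(c_{i,j})_{i,j}$ with
$$
|\alpha|\bigvee_{i=1}^{\infty} a_{i,\varphi(i)}
+ |\beta|\bigvee_{i=1}^{\infty} b_{i,\varphi(i)}
\leq \bigvee_{i=1}^{\infty} c_{i,\varphi(i)}
$$
for every $\varphi$. The existence of such a dominating regulator, built from scaling the two given $(D)$-sequences by the constants $|\alpha|, |\beta|$ and combining them, is the one genuinely Riesz-space-specific point; it relies on the structural properties of $(D)$-sequences in a Dedekind complete weakly $\sigma$-distributive space. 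Combining this with the triangle inequality estimate yields $\left|S(\alpha f + \beta g, \mathcal{D}, \delta) - (\alpha x_1 + \beta x_2)\right| < \bigvee_{i=1}^{\infty} c_{i,\varphi(i)}$ for every $\varphi$ and every $\delta$-fine partition, which is exactly Definition~\ref{def1} with integral value $\alpha x_1 + \beta x_2$. By the uniqueness established in Theorem~\ref{th1}, this common value is the integral of $\alpha f + \beta g$, completing the argument.
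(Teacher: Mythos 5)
Your proof is correct and follows essentially the same route as the paper's: fix $\varphi$, take $\delta=\min\{\delta_{1},\delta_{2}\}$, apply the triangle inequality to the linear decomposition of the Riemann sum, and invoke the existence of a single dominating $(D)$-sequence obtained by scaling and combining the two given regulators. The only difference is cosmetic: the paper proves additivity ($f+g$) and scalar homogeneity ($cf$, via the regulator $(|c|a_{i,j})_{i,j}$) as two separate steps, whereas you absorb both into one combined estimate for $\alpha f+\beta g$.
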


\begin{proof}
We shall prove that if $f, g$ are HK $\Delta$-integrable on
$[a,b]_{\mathbb{T}}$ and $c\in \mathbb{R}$, then $f+g$ and
$cf$ are HK $\Delta$-integrable too and
$$
\int_{a}^{b}(f(t)+g(t))\Delta t=\int_{a}^{b}f(t)\Delta t+\int_{a}^{b}g(t)\Delta t,
\quad
\int_{a}^{b}c f(t)\Delta t=c\int_{a}^{b}f(t)\Delta t.
$$
If $f$ is HK $\Delta$-integrable on $[a,b]_{\mathbb{T}}$, then
there exists a $(D)$-sequence $(a_{i,j})_{i,j}$ of elements of $X$
such that for every $\varphi \in\mathbb{N}^{\mathbb{N}}$ there exists
a $\Delta$-gauge, $\delta_{1}$, for $[a,b]_{\mathbb{T}}$, such that
$$
\left|S(f,\mathcal{D}_{1},\delta_{1})-\int_{a}^{b}f(t)\Delta t\right|
<\bigvee_{i=1}^{\infty}a_{i,\varphi(i)}
$$
for each $\delta_{1}$-fine HK partition $\mathcal{D}_{1}$. Similarly,
there exists a $(D)$-sequence $(b_{i,j})_{i,j}$ of elements of $X$
such that for every $\varphi \in\mathbb{N}^{\mathbb{N}}$ there exists
a $\Delta$-gauge, $\delta_{2}$, for $[a,b]_{\mathbb{T}}$, such that
$$
\left|S(g,\mathcal{D}_{2},\delta_{2})-\int_{a}^{b}g(t)\Delta t\right|
<\bigvee_{i=1}^{\infty}b_{i,\varphi(i)}
$$
for each $\delta_{2}$-fine HK partition $\mathcal{D}_{2}$.
Let $\delta=\min\{\delta_{1}, \delta_{2}\}$,
and consider a $(D)$-sequence $(c_{i,j})_{i,j}$ of elements of $X$ such that
$$
\bigvee_{i=1}^{\infty}a_{i,\varphi(i)}+\bigvee_{i=1}^{\infty}b_{i,\varphi(i)}
\leq \bigvee_{i=1}^{\infty}c_{i,\varphi(i)}
$$
for every $\varphi \in\mathbb{N}^{\mathbb{N}}$. Then,
\begin{eqnarray*}
\left|S(f+g,\mathcal{D},\delta)-\int_{a}^{b}f(t)\Delta t-\int_{a}^{b}g(t)\Delta t\right|
&=&\left|S(f,\mathcal{D},\delta)-\int_{a}^{b}f(t)\Delta t+S(g,\mathcal{D},\delta)
-\int_{a}^{b}g(t)\Delta t\right| \\
&\leq&\left|S(f,\mathcal{D},\delta)-\int_{a}^{b}f(t)\Delta t\right|
+\left|S(g,\mathcal{D},\delta)-\int_{a}^{b}g(t)\Delta t\right|\\
&<&\bigvee_{i=1}^{\infty}a_{i,\varphi(i)}+\bigvee_{i=1}^{\infty}b_{i,\varphi(i)}\\
&\leq& \bigvee_{i=1}^{\infty}c_{i,\varphi(i)}
\end{eqnarray*}
for each $\delta$-fine HK partition $\mathcal{D}$. Hence, $f+g$ is HK $\Delta$-integrable and
$$
\int_{a}^{b}(f(t)+g(t))\Delta t=\int_{a}^{b}f(t)\Delta t+\int_{a}^{b}g(t)\Delta t.
$$
For $c\in \mathbb{R}$, $(|c|a_{i,j})_{i,j}$ is a $(D)$-sequence. Then,
$$
\left|S(cf,\mathcal{D},\delta)-c\int_{a}^{b}f(t)\Delta t\right|
\leq |c|\left|S(f,\mathcal{D},\delta)-\int_{a}^{b}f(t)\Delta t\right|
<|c|\bigvee_{i=1}^{\infty}a_{i,\varphi(i)}
=\bigvee_{i=1}^{\infty}|c|a_{i,\varphi(i)}
$$
for each $\delta$-fine HK partition $\mathcal{D}$. This implies
that $cf$ is HK $\Delta$-integrable and
$$
\int_{a}^{b}c f(t)\Delta t=c\int_{a}^{b}f(t)\Delta t.
$$
The proof is complete.
\end{proof}

\begin{theorem}[Cauchy--Bolzano condition]
\label{th3}
A function $f:[a,b]_{\mathbb{T}}\rightarrow X$ is HK $\Delta$-integrable
on $[a,b]_{\mathbb{T}}$ if and only if there exists a $(D)$-sequence
$(a_{i,j})_{i,j}$ of elements of $X$ such that for every
$\varphi \in\mathbb{N}^{\mathbb{N}}$ there exists a $\Delta$-gauge, $\delta$,
for $[a,b]_{\mathbb{T}}$, such that
$$
\left|S(f,\mathcal{D}_{1},\delta)-S(f,\mathcal{D}_{2},\delta)\right|
<\bigvee_{i=1}^{\infty}a_{i,\varphi(i)}
$$
for each $\delta$-fine HK partition $\mathcal{D}_{1},\mathcal{D}_{2}$
of $[a,b]_{\mathbb{T}}$.
\end{theorem}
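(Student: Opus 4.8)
The plan is to prove the Cauchy--Bolzano characterization in the usual two directions, adapting the standard Henstock--Kurzweil argument to the Riesz-space setting where the role of $\varepsilon>0$ is played by the bounds $\bigvee_{i=1}^{\infty}a_{i,\varphi(i)}$ determined by a $(D)$-sequence and $\varphi\in\mathbb{N}^{\mathbb{N}}$. First I would dispatch the necessity direction, which is a routine triangle-inequality estimate. Assuming $f$ is HK $\Delta$-integrable with integral $x$ and associated $(D)$-sequence $(a_{i,j})_{i,j}$, for a given $\varphi$ I pick the gauge $\delta$ furnished by Definition~\ref{def1}, so that any $\delta$-fine partition $\mathcal{D}_k$ satisfies $\bigl|S(f,\mathcal{D}_k,\delta)-x\bigr|<\bigvee_{i=1}^{\infty}a_{i,\varphi(i)}$. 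Then for two such partitions I write
\begin{equation*}
\bigl|S(f,\mathcal{D}_1,\delta)-S(f,\mathcal{D}_2,\delta)\bigr|
\leq\bigl|S(f,\mathcal{D}_1,\delta)-x\bigr|+\bigl|S(f,\mathcal{D}_2,\delta)-x\bigr|
<\bigvee_{i=1}^{\infty}c_{i,\varphi(i)},
\end{equation*}
where $(c_{i,j})_{i,j}$ is a $(D)$-sequence dominating $(a_{i,j})_{i,j}+(a_{i,j})_{i,j}$, exactly the dominating-regulator trick already used in the proofs of Theorems~\ref{th1} and~\ref{th2}.

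The substantive direction is sufficiency. Here I would assume the Cauchy condition holds with $(D)$-sequence $(a_{i,j})_{i,j}$ and must manufacture the limit element $x\in X$ together with a possibly enlarged $(D)$-sequence certifying integrability. The natural approach is to choose, for each $n\in\mathbb{N}$, the constant function $\varphi_n\equiv n$ and the corresponding gauge $\delta_n$, which I may assume nested (finer as $n$ grows) by replacing $\delta_n$ with $\min\{\delta_1,\dots,\delta_n\}$; this uses that the minimum of finitely many $\Delta$-gauges is again a $\Delta$-gauge. Picking one $\delta_n$-fine partition $\mathcal{D}_n$ for each $n$ (such partitions exist by a Cousin-type lemma on time scales, which I would invoke as known) produces a sequence of Riemann sums $x_n:=S(f,\mathcal{D}_n,\delta_n)$. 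I would then argue that $(x_n)_n$ is $(o)$-Cauchy in $X$: for $m\geq n$, since $\mathcal{D}_m$ is $\delta_m$-fine and hence $\delta_n$-fine, the Cauchy hypothesis with $\varphi\equiv n$ gives $|x_n-x_m|<\bigvee_{i=1}^{\infty}a_{i,n}$, a single $(o)$-sequence tail. Invoking Dedekind completeness and weak $\sigma$-distributivity of $X$, such an $(o)$-Cauchy sequence has an $(o)$-limit $x\in X$; this convergence step is where I expect the real work to lie.

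Finally I would verify that $x$ is the integral. Fixing $\varphi\in\mathbb{N}^{\mathbb{N}}$, for any $\delta_n$-fine partition $\mathcal{D}$ I would split
\begin{equation*}
\bigl|S(f,\mathcal{D},\delta_n)-x\bigr|
\leq\bigl|S(f,\mathcal{D},\delta_n)-x_n\bigr|+|x_n-x|
<\bigvee_{i=1}^{\infty}a_{i,n}+|x_n-x|,
\end{equation*}
and then let $n$ be governed by $\varphi$ so that both terms are absorbed into a single bound $\bigvee_{i=1}^{\infty}d_{i,\varphi(i)}$ for a suitable new $(D)$-sequence $(d_{i,j})_{i,j}$ built from $(a_{i,j})_{i,j}$ and the regulator controlling the convergence $x_n\to x$. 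The main obstacle is precisely this bookkeeping: in the scalar theory one simply sends $\varepsilon\to 0$, but in the Riesz-space setting I must exhibit an explicit regulator and a matching gauge uniformly in $\varphi$, which forces careful use of weak $\sigma$-distributivity to combine the two $(D)$-sequences into one while keeping the gauge dependent only on $\varphi$ as Definition~\ref{def1} demands.
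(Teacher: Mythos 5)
Your necessity argument is fine and coincides with the paper's. The sufficiency direction, however, contains a genuine gap, and it is located exactly at the step you flag as "where the real work lies." Your plan is to use only the \emph{constant} functions $\varphi\equiv n$, obtaining gauges $\delta_n$ and Riemann sums $x_n$ with $|x_n-x_m|<\bigvee_{i=1}^{\infty}a_{i,n}$ for $m\geq n$, and then to treat $\bigl(\bigvee_{i=1}^{\infty}a_{i,n}\bigr)_n$ as if it were an $(o)$-sequence controlling an $(o)$-Cauchy sequence. This fails: for a $(D)$-sequence, each row $(a_{i,j})_j$ decreases to $0$, but the sequence $b_n=\bigvee_{i=1}^{\infty}a_{i,n}$, while decreasing, need \emph{not} have infimum $0$. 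For instance, fix $u>0$ in $X$ and set $a_{i,j}=u$ if $j\leq i$ and $a_{i,j}=0$ if $j>i$; this is a legitimate regulator (each row is an $(o)$-sequence, and $\bigvee_{i}a_{i,\varphi(i)}=0$ for $\varphi(i)=i+1$), yet $\bigvee_{i=1}^{\infty}a_{i,n}=u$ for every $n$. Weak $\sigma$-distributivity asserts $\bigwedge_{\varphi\in\mathbb{N}^{\mathbb{N}}}\bigl(\bigvee_{i=1}^{\infty}a_{i,\varphi(i)}\bigr)=0$ precisely because the infimum must range over all $\varphi$, the diagonal choices included; restricting to constant $\varphi$ discards the entire content of the hypothesis. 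Consequently your sequence $(x_n)$ is not shown to be $(o)$-Cauchy in any usable sense, the limit $x$ is not produced, and the final bookkeeping step (absorbing $\bigvee_{i=1}^{\infty}a_{i,n}+|x_n-x|$ into a bound $\bigvee_{i=1}^{\infty}d_{i,\varphi(i)}$ uniformly in $\varphi$) cannot be carried out, since the first summand does not become small as $n$ grows.

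The paper avoids sequences altogether and keeps $\varphi$ general throughout. For each $\varphi$ it takes the gauge $\delta_{\varphi}$ from the Cauchy hypothesis and, using Dedekind completeness, forms the order envelopes $a_{\delta}=\bigwedge_{\mathcal{D}}S(f,\mathcal{D},\delta)$ and $b_{\delta}=\bigvee_{\mathcal{D}}S(f,\mathcal{D},\delta)$ over \emph{all} $\delta$-fine partitions. Comparing two gauges via their minimum gives $a_{\delta_{1}}\leq b_{\delta_{2}}$, hence $\bigvee_{\delta}a_{\delta}\leq\bigwedge_{\delta}b_{\delta}$, and one picks $x$ in between. The Cauchy estimate, applied with the \emph{same} arbitrary $\varphi$, yields $b_{\delta_{\varphi}}\leq a_{\delta_{\varphi}}+\bigvee_{i=1}^{\infty}a_{i,\varphi(i)}$, whence $|S(f,\mathcal{D},\delta_{\varphi})-x|\leq b_{\delta_{\varphi}}-a_{\delta_{\varphi}}\leq\bigvee_{i=1}^{\infty}a_{i,\varphi(i)}$ for every $\delta_{\varphi}$-fine $\mathcal{D}$; weak $\sigma$-distributivity is invoked only at the end, to see that $x$ is simultaneously $\bigwedge_{\varphi}b_{\delta_{\varphi}}$ and $\bigvee_{\varphi}a_{\delta_{\varphi}}$. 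Note that this construction certifies integrability with the \emph{original} regulator $(a_{i,j})_{i,j}$ and the gauge depending on $\varphi$ exactly as Definition~\ref{def1} requires, with no combination of countably many regulators needed. If you want to repair your own outline, you must replace the constant-$\varphi$ extraction by an argument that retains the full $\varphi$-indexed family of bounds; the envelope construction is the standard way to do this in the Riesz-space setting.
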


\begin{proof}
(Necessity). This follows from the inequality
$$
\left|S(f,\mathcal{D}_{1},\delta)-S(f,\mathcal{D}_{2},\delta)\right|
\leq \left|S(f,\mathcal{D}_{1},\delta)-\int_{a}^{b}f(t)\Delta t\right|
+\left|S(f,\mathcal{D}_{2},\delta)-\int_{a}^{b}f(t)\Delta t\right|
$$
and some routine arguments. (Sufficiency). To every $\varphi \in\mathbb{N}^{\mathbb{N}}$,
there exists a $\Delta$-gauge $\delta_{\varphi}(\xi)$ with the following property. Let
$$
\delta_{[a,b]_{\mathbb{T}}}=\left\{\delta(\xi):\exists \varphi
\in\mathbb{N}^{\mathbb{N}}, \delta(\xi)
=\delta_{\varphi}(\xi), \xi\in[a,b]_{\mathbb{T}}\right\}.
$$
Then, for $\delta(\xi)\in\delta_{[a,b]_{\mathbb{T}}}$ and a $\delta$-fine HK partition $\mathcal{D}$,
the set $\left\{S(f,\mathcal{D},\delta)\right\}$ is bounded. Indeed, for $X$ boundedly complete,
there exist
$$
a_{\delta}=\bigwedge_{\mathcal{D}}S(f,\mathcal{D},\delta),
\quad
b_{\delta}=\bigvee_{\mathcal{D}}S(f,\mathcal{D},\delta).
$$
For $\delta_{1}(\xi),\delta_{2}(\xi)\in\delta_{[a,b]_{\mathbb{T}}}$,
let $\delta(\xi)=\min\{\delta_{1}(\xi),\delta_{2}(\xi)\}$. Then,
$$
a_{\delta_{1}}=\bigwedge_{\mathcal{D}}S(f,\mathcal{D},\delta_{1})
\leq\bigwedge_{\mathcal{D}}S(f,\mathcal{D},\delta)
\leq\bigvee_{\mathcal{D}}S(f,\mathcal{D},\delta)
\leq \bigvee_{\mathcal{D}}S(f,\mathcal{D},\delta_{2})=b_{\delta_{2}}.
$$
Therefore,
$$
\bigvee_{\delta(\xi)\in\delta_{[a,b]_{\mathbb{T}}}}a_{\delta}\leq\bigwedge_{\delta(\xi)
\in\delta_{[a,b]_{\mathbb{T}}}}b_{\delta}.
$$
Hence, there exists $x\in X$ such that $a_{\delta}\leq x\leq b_{\delta}$
for all $\delta(\xi)\in\delta_{[a,b]_{\mathbb{T}}}$. Now, let
$\varphi \in\mathbb{N}^{\mathbb{N}}$. Then there exists a $\Delta$-gauge
$\delta_{\varphi}(\xi)$ for $[a,b]_{\mathbb{T}}$ such that
$$
S(f,\mathcal{D}_{1},\delta_{\varphi})
\leq S(f,\mathcal{D}_{2},\delta_{\varphi})
+\bigvee_{i=1}^{\infty}a_{i,\varphi(i)}
$$
for each $\delta_{\varphi}$-fine HK partition $\mathcal{D}_{1},\mathcal{D}_{2}$.
Fix $\mathcal{D}_{2}$. Then
$$
b_{\delta_{\varphi}}\leq S(f,\mathcal{D}_{2},\delta_{\varphi})
+\bigvee_{i=1}^{\infty}a_{i,\varphi(i)}.
$$
Since the inequality holds for every $\delta_{\varphi}$-fine
HK partition $\mathcal{D}_{2}$, we have
$$
b_{\delta_{\varphi}}\leq a_{\delta_{\varphi}}
+\bigvee_{i=1}^{\infty}a_{i,\varphi(i)}.
$$
By the weak $\sigma$-distributivity of $X$, we obtain that
$$
\bigwedge_{\varphi \in\mathbb{N}^{\mathbb{N}}}\left(
\bigvee_{i=1}^{\infty}a_{i,\varphi(i)}\right)=0
$$
and so
$$
\bigwedge_{\varphi \in\mathbb{N}^{\mathbb{N}}}b_{\delta_{\varphi}}
-\bigvee_{\varphi \in\mathbb{N}^{\mathbb{N}}}a_{\delta_{\varphi}}
\leq\bigwedge_{\varphi \in\mathbb{N}^{\mathbb{N}}}(b_{\delta_{\varphi}}
-a_{\delta_{\varphi}})=0.
$$
Consequently,
$$
x=\bigwedge_{\varphi \in\mathbb{N}^{\mathbb{N}}}b_{\delta_{\varphi}}
=\bigvee_{\varphi \in\mathbb{N}^{\mathbb{N}}}a_{\delta_{\varphi}}.
$$
Then, for every $\delta_{\varphi}$-fine HK partition $\mathcal{D}$, we have
$$
S(f,\mathcal{D},\delta_{\varphi})-x\leq b_{\delta_{\varphi}}
-a_{\delta_{\varphi}}\leq \bigvee_{i=1}^{\infty}a_{i,\varphi(i)},
$$
$$
x- S(f,\mathcal{D},\delta_{\varphi})
\leq b_{\delta_{\varphi}}-a_{\delta_{\varphi}}
\leq \bigvee_{i=1}^{\infty}a_{i,\varphi(i)}.
$$
It follows that
$$
\left|S(f,\mathcal{D},\delta_{\varphi})-x\right|
\leq \bigvee_{i=1}^{\infty}a_{i,\varphi(i)}
$$
and the proof is complete.
\end{proof}

\begin{theorem}
\label{th4}
If $f:[a,b]_{\mathbb{T}}\rightarrow X$, then $f$
is HK-$\Delta$ integrable on $[a,b]_{\mathbb{T}}$
if and only if $f$ is HK-$\Delta$ integrable on
$[a,c]_{\mathbb{T}}$ and $[c,b]_{\mathbb{T}}$.
Moreover, in this case
$$
\int_{a}^{b}f(t)\Delta t=\int_{a}^{c}f(t)\Delta t+\int_{c}^{b}f(t)\Delta t.
$$
\end{theorem}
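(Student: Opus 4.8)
The plan is to prove both implications, leaning on the Cauchy--Bolzano criterion of Theorem~\ref{th3} for the forward direction and on a direct $\varepsilon$-style estimate (here phrased with $(D)$-sequences) for the converse, exactly in the spirit of Theorems~\ref{th1} and~\ref{th2}. Throughout I assume $c\in[a,b]_{\mathbb{T}}$ with $a<c<b$, the degenerate cases $c=a$ or $c=b$ being trivial. The recurring algebraic device is that, given two $(D)$-sequences, one can select a third $(D)$-sequence $(c_{i,j})_{i,j}$ whose associated suprema dominate the sum of the other two for every $\varphi\in\mathbb{N}^{\mathbb{N}}$; this is the same fact already used above and lets me merge the error bounds coming from the two subintervals.

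For the forward direction, suppose $f$ is HK $\Delta$-integrable on $[a,b]_{\mathbb{T}}$ and let $(a_{i,j})_{i,j}$ be the $(D)$-sequence furnished by Theorem~\ref{th3}. Fix $\varphi$ and the corresponding $\Delta$-gauge $\delta$ on $[a,b]_{\mathbb{T}}$; its restriction to $[a,c]_{\mathbb{T}}$ is again a $\Delta$-gauge. To verify the Cauchy condition on $[a,c]_{\mathbb{T}}$ I would take any two $\delta$-fine HK partitions $\mathcal{D}_{1},\mathcal{D}_{1}'$ of $[a,c]_{\mathbb{T}}$ together with a single fixed $\delta$-fine HK partition $\mathcal{Q}$ of $[c,b]_{\mathbb{T}}$ (whose existence is the Cousin-type property of $\delta$-fine partitions on time scales). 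Adjoining the same $\mathcal{Q}$ to each of $\mathcal{D}_{1},\mathcal{D}_{1}'$ yields $\delta$-fine HK partitions of $[a,b]_{\mathbb{T}}$ whose Riemann sums differ by exactly $S(f,\mathcal{D}_{1},\delta)-S(f,\mathcal{D}_{1}',\delta)$, since the contributions from $\mathcal{Q}$ cancel. Theorem~\ref{th3} applied on $[a,b]_{\mathbb{T}}$ then bounds this difference by $\bigvee_{i=1}^{\infty}a_{i,\varphi(i)}$, which is precisely the Cauchy condition on $[a,c]_{\mathbb{T}}$; hence $f$ is HK $\Delta$-integrable on $[a,c]_{\mathbb{T}}$, and symmetrically on $[c,b]_{\mathbb{T}}$.

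For the converse and the additivity formula, write $A=\int_{a}^{c}f(t)\Delta t$ and $B=\int_{c}^{b}f(t)\Delta t$. Fixing $\varphi$, I obtain $\Delta$-gauges $\delta_{1}$ on $[a,c]_{\mathbb{T}}$ and $\delta_{2}$ on $[c,b]_{\mathbb{T}}$ controlling the two integrals through $(D)$-sequences $(a_{i,j})_{i,j}$ and $(b_{i,j})_{i,j}$, and I choose $(c_{i,j})_{i,j}$ dominating their sum. I then glue $\delta_{1}$ and $\delta_{2}$ into a single $\Delta$-gauge $\delta$ on $[a,b]_{\mathbb{T}}$, refined so that $c$ is forced to be a division point of every $\delta$-fine partition. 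Any such partition $\mathcal{D}$ then splits at $c$ into a $\delta_{1}$-fine partition $\mathcal{D}_{1}$ of $[a,c]_{\mathbb{T}}$ and a $\delta_{2}$-fine partition $\mathcal{D}_{2}$ of $[c,b]_{\mathbb{T}}$, with $S(f,\mathcal{D},\delta)=S(f,\mathcal{D}_{1},\delta_{1})+S(f,\mathcal{D}_{2},\delta_{2})$. The triangle inequality then gives $\left|S(f,\mathcal{D},\delta)-(A+B)\right|\le\left|S(f,\mathcal{D}_{1},\delta_{1})-A\right|+\left|S(f,\mathcal{D}_{2},\delta_{2})-B\right|<\bigvee_{i=1}^{\infty}a_{i,\varphi(i)}+\bigvee_{i=1}^{\infty}b_{i,\varphi(i)}\le\bigvee_{i=1}^{\infty}c_{i,\varphi(i)}$, which simultaneously establishes integrability on $[a,b]_{\mathbb{T}}$ and the equality $\int_{a}^{b}f=A+B$.

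The main obstacle is the construction of the glued gauge $\delta$ forcing $c$ to be a division point, which is where the time-scale constraints intrude. On the real line one simply shrinks $\delta_{R}(\xi)\le c-\xi$ for tags $\xi<c$ and $\delta_{L}(\xi)\le\xi-c$ for tags $\xi>c$, so that no controlling interval $(\xi-\delta_{L}(\xi),\xi+\delta_{R}(\xi))_{\mathbb{T}}$ with $\xi\ne c$ can contain $c$ in its interior; the only subinterval able to hold $c$ interiorly must then be tagged by $c$, and it may be split at $c$ without losing $\delta$-fineness. The one point needing verification is compatibility of this shrinking with the defining requirement $\delta_{R}(\xi)\ge\mu(\xi)$ of a $\Delta$-gauge. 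This holds automatically: for any $\xi<c$ one has $\sigma(\xi)=\inf\{s>\xi:s\in\mathbb{T}\}\le c$ because $c\in\mathbb{T}$ and $c>\xi$, whence $\mu(\xi)=\sigma(\xi)-\xi\le c-\xi$, so a value of $\delta_{R}(\xi)$ in $[\mu(\xi),c-\xi]$ exists and satisfies both conditions. I expect this compatibility check, together with confirming that a $\delta$-fine partition exists and cleaves cleanly at $c$, to be the only genuinely time-scale-specific part of the argument; everything else reduces to the routine $(D)$-sequence bookkeeping already displayed in Theorems~\ref{th1}--\ref{th3}.
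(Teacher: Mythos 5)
Your proposal is correct and follows essentially the same route as the paper: the forward direction via the Cauchy--Bolzano criterion of Theorem~\ref{th3}, adjoining one fixed $\delta$-fine partition of $[c,b]_{\mathbb{T}}$ to two partitions of $[a,c]_{\mathbb{T}}$ so its contribution cancels, and the converse via a glued $\Delta$-gauge that forces every fine partition to split at $c$, followed by the usual $(D)$-sequence merging. The only cosmetic differences are your cleaner shrinking $\delta_{R}(\xi)\leq c-\xi$ justified by $\mu(\xi)\leq c-\xi$ (the paper instead uses $\min\{\delta^{1}_{R}(\xi),\max\{\mu(\xi),\frac{c-\xi}{2}\}\}$ and a two-case analysis in which either $c$ or $\rho(c)$ is a tag), and that you obtain the additivity formula in the sufficiency half (invoking uniqueness) whereas the paper derives it within the necessity half.
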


\begin{proof}
(Necessity). By Theorem~\ref{th3}, there exists a $(D)$-sequence $(a_{i,j})_{i,j}$
of elements of $X$ such that for every $\varphi \in\mathbb{N}^{\mathbb{N}}$
there exists a $\Delta$-gauge, $\delta$, for $[a,b]_{\mathbb{T}}$, such that
$$
\left|S(f,\mathcal{D}_{1},\delta)-S(f,\mathcal{D}_{2},\delta)\right|
<\bigvee_{i=1}^{\infty}a_{i,\varphi(i)}$$ for each
$\delta$-fine HK partition $\mathcal{D}_{1},\mathcal{D}_{2}$
of $[a,b]_{\mathbb{T}}$. Take any two $\delta$-fine HK partition of
$[a,c]_{\mathbb{T}}$, say, $\mathcal{D}_{3}$ and $\mathcal{D}_{4}$.
Similarly, take another $\delta$-fine HK partition $\mathcal{D}_{5}$ of $[c,b]_{\mathbb{T}}$.
Then, we have
$$
\left|S(f,\mathcal{D}_{3},\delta)-S(f,\mathcal{D}_{4},\delta)\right|
=\left|S(f,\mathcal{D}_{3}+\mathcal{D}_{5},\delta)
-S(f,\mathcal{D}_{5},\delta)+S(f,\mathcal{D}_{5},\delta)
-S(f,\mathcal{D}_{4}+\mathcal{D}_{5},\delta)\right|
<\bigvee_{i=1}^{\infty}a_{i,\varphi(i)}.
$$
Hence, $f$ is HK-$\Delta$ integrable on $[a,c]_{\mathbb{T}}$.
Similarly, $f$ is HK-$\Delta$ integrable on $[c,b]_{\mathbb{T}}$.
Consequently, there exist $(D)$-sequences $(a_{i,j})_{i,j}$, $(b_{i,j})_{i,j}$
and $(c_{i,j})_{i,j}$ of elements of $X$ such that for every
$\varphi \in\mathbb{N}^{\mathbb{N}}$ there exists a $\Delta$-gauge,
$\delta$, for $[a,b]_{\mathbb{T}}$, such that
$$
\left|S(f,\mathcal{D},\delta)-\int_{a}^{b}f(t)\Delta t\right|
<\bigvee_{i=1}^{\infty}a_{i,\varphi(i)},
$$
$$
\left|S(f,\mathcal{D'},\delta)-\int_{a}^{c}f(t)\Delta t\right|
<\bigvee_{i=1}^{\infty}b_{i,\varphi(i)},
$$
$$
\left|S(f,\mathcal{D}-\mathcal{D}',\delta)-\int_{c}^{b}f(t)\Delta t\right|
<\bigvee_{i=1}^{\infty}c_{i,\varphi(i)},
$$
for each $\delta$-fine HK partition $\mathcal{D}$ of $[a,b]_{\mathbb{T}}$,
$\mathcal{D'}$, of $[a,c]_{\mathbb{T}}$ and $\mathcal{D}-\mathcal{D}'$
of $[c,b]_{\mathbb{T}}$. Then, there exists a $(D)$-sequence $(d_{i,j})_{i,j}$
of elements of $X$ such that
\begin{equation*}
\begin{split}
\Bigg|\int_{a}^{b}f(t)\Delta &t
-\int_{a}^{c}f(t)\Delta t-\int_{c}^{b}f(t)\Delta t\Bigg|\\
&\leq\left|S(f,\mathcal{D},\delta)-\int_{a}^{b}f(t)\Delta t\right|
+\left|S(f,\mathcal{D'},\delta)-\int_{a}^{c}f(t)\Delta t\right|
+ \left|S(f,\mathcal{D}-\mathcal{D}',\delta)-\int_{c}^{b}f(t)\Delta t\right|\\
&\leq\bigvee_{i=1}^{\infty}a_{i,\varphi(i)}
+\bigvee_{i=1}^{\infty}b_{i,\varphi(i)}
+\bigvee_{i=1}^{\infty}c_{i,\varphi(i)}\leq\bigvee_{i=1}^{\infty}d_{i,\varphi(i)}
\end{split}
\end{equation*}
and the result follows. (Sufficiency). Let $f$ be HK-$\Delta$ integrable on
$[a,c]_{\mathbb{T}}$ and $[c,b]_{\mathbb{T}}$. Then there exist
$(D)$-sequences $(a_{i,j})_{i,j},(b_{i,j})_{i,j}$ of elements of $X$
such that for every $\varphi \in\mathbb{N}^{\mathbb{N}}$ there exist $\Delta$-gauge,
$$
\delta^{1}(\xi)=\left(\delta^{1}_{L}(\xi),\delta^{1}_{R}(\xi)\right),
\quad \delta^{2}(\xi)=\left(\delta^{2}_{L}(\xi),\delta^{2}_{R}(\xi)\right),
$$
for $[a,b]_{\mathbb{T}}$ such that
$$
\left|S(f,\mathcal{D}_{1},\delta^{1})
-\int_{a}^{c}f(t)\Delta t\right|
<\bigvee_{i=1}^{\infty}a_{i,\varphi(i)},
\quad
\left|S(f,\mathcal{D}_{2},\delta^{2})
-\int_{c}^{b}f(t)\Delta t\right|
<\bigvee_{i=1}^{\infty}b_{i,\varphi(i)}
$$
for each $\delta^{1}$-fine HK partition
$\mathcal{D}_{1}={\{([t^{1}_{k-1},t^{1}_{k}]_{\mathbb{T}},\xi^{1}_{k})\}}^{n}_{k=1}$
of $[a,c]_{\mathbb{T}}$ and for each $\delta^{2}$-fine HK partition
$\mathcal{D}_{2}={\{([t^{2}_{k-1},t^{2}_{k}]_{\mathbb{T}},\xi^{2}_{k})\}}^{m}_{k=1}$
of $[c,b]_{\mathbb{T}}$, respectively. We define a $\Delta$-gauge,
$\delta(\xi)=(\delta_{L}(\xi),\delta_{R}(\xi))$, on $[a,b]_{\mathbb{T}}$,
by first defining $\delta_{L}(\xi)$ as
\begin{equation*}
\delta_{L}(\xi)=
\begin{cases}
\delta^{1}_{L}(\xi),& \text{if $\xi\in [a,c)_{\mathbb{T}}$},\\
\delta^{1}_{L}(\xi),& \text{if $\xi=c=\rho(c)$},\\
\min\left\{\delta^{1}_{L}(\xi),\frac{\eta(c)}{2}\right\},& \text{if $\xi=c>\rho(c)$},\\
\min\left\{\delta^{2}_{L}(\xi),\frac{\xi-c}{2}\right\},& \text{if $\xi\in (c,b]_{\mathbb{T}}$},
\end{cases}
\end{equation*}
and then defining $\delta_{R}(\xi)$ as
\begin{equation*}
\delta_{R}(\xi)=
\begin{cases}
\min\left\{\delta^{1}_{R}(\xi),\max\left\{\mu(\xi),\frac{c-\xi}{2}\right\}\right\},
& \text{if $\xi\in [a,c)_{\mathbb{T}}$},\\
\min\{\delta^{2}_{R}(\xi)\},& \text{if $\xi\in [c,b]_{\mathbb{T}}$}.
\end{cases}
\end{equation*}
Now, let $\mathcal{D}={\{([t_{k-1},t_{k}]_{\mathbb{T}},\xi_{k})\}}^{p}_{k=1}$
be a $\delta$-fine HK partition of $[a,b]_{\mathbb{T}}$. Then,
either $c$ is a tag point for $\mathcal{D}$, say $c=\xi_{q}$,
and $t_{q}>c$; or $\rho(c)<c$, and $\rho(c)$ is a tag point for $\mathcal{D}$,
say $\rho(c)=\xi_{q}$, and $t_{q}=c$. In the first case, there exists a
$(D)$-sequence $(c_{i,j})_{i,j}$ of elements of $X$ such that for every
$\varphi \in\mathbb{N}^{\mathbb{N}}$ we have
\begin{eqnarray*}
\Bigg|S(f,\mathcal{D},\delta)
&-&\int_{a}^{c}f(t)\Delta t-\int_{c}^{b}f(t)\Delta t\Bigg|\\
&=&\left|\sum_{k=1}^{p}f(\xi_{k})(t_{k}-t_{k-1})
-\int_{a}^{c}f(t)\Delta t-\int_{c}^{b}f(t)\Delta t\right|\\
&\leq&\left|\sum_{k=1}^{q-1}f(\xi_{k})(t_{k}-t_{k-1})+f(c)(c-t_{q-1})
-\int_{a}^{c}f(t)\Delta t\right|\\
&&+\left|\sum_{k=q+1}^{p}f(\xi_{k})(t_{k}-t_{k-1})+f(c)(t_{q}-c)
-\int_{c}^{b}f(t)\Delta t\right|\\
&<& \bigvee_{i=1}^{\infty}a_{i,\varphi(i)}+\bigvee_{i=1}^{\infty}b_{i,\varphi(i)}
<\bigvee_{i=1}^{\infty}c_{i,\varphi(i)}.
\end{eqnarray*}
Using the weak $\sigma$-distributivity, we get the corresponding results.
The other case is easy and is omitted. Hence, $f$ is HK-$\Delta$ integrable
on $[a,b]_{\mathbb{T}}$ and
$\displaystyle \int_{a}^{b}f(t)\Delta t=\int_{a}^{c}f(t)\Delta t+\int_{c}^{b}f(t)\Delta t$.
This concludes the proof.
\end{proof}

\begin{lemma}[The Saks--Henstock lemma]
\label{lem1}
Let $f:[a,b]_{\mathbb{T}}\rightarrow X$ be HK-$\Delta$ integrable
on $[a,b]_{\mathbb{T}}$. Then there exists a $(D)$-sequence
$(a_{i,j})_{i,j}$ of elements of $X$ such that
for every $\varphi \in\mathbb{N}^{\mathbb{N}}$ there exists
a $\Delta$-gauge, $\delta$, for $[a,b]_{\mathbb{T}}$, such that
$$
\left|S(f,\mathcal{D},\delta)-\int_{a}^{b}f(t)\Delta t\right|
<\bigvee_{i=1}^{\infty}a_{i,\varphi(i)}
$$
for each $\delta$-fine HK partition
$\mathcal{D}$ of $[a,b]_{\mathbb{T}}$.
In particular, if $\mathcal{D}^{'}={\{([t_{k-1},t_{k}]_{\mathbb{T}},\xi_{k})\}}^{m}_{k=1}$
is an arbitrary $\delta$-fine partial HK partition of $[a,b]_{\mathbb{T}}$, then
$$
\left|S(f,\mathcal{D}^{'},\delta)
-\sum_{k=1}^{m}\int_{t_{k-1}}^{t_{k}}f(t)\Delta t\right|
\leq\bigvee_{i=1}^{\infty}a_{i,\varphi(i)}.
$$
\end{lemma}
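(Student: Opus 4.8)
The plan is to prove the Saks--Henstock lemma in two stages. The first sentence of the statement---that an HK $\Delta$-integrable function satisfies the global estimate $|S(f,\mathcal{D},\delta)-\int_a^b f|<\bigvee_i a_{i,\varphi(i)}$---is essentially just a restatement of Definition~\ref{def1}, so I would dispense with it in one line and concentrate all the effort on the partial-partition estimate, which is the substantive content of the lemma.

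For the partial-partition inequality, I would start from an arbitrary $\delta$-fine partial HK partition $\mathcal{D}'=\{([t_{k-1},t_k]_{\mathbb{T}},\xi_k)\}_{k=1}^m$. The key idea is that the complement of $\bigcup_k [t_{k-1},t_k]_{\mathbb{T}}$ in $[a,b]_{\mathbb{T}}$ consists of finitely many closed subintervals $[c_{j-1},c_j]_{\mathbb{T}}$, $j=1,\dots,r$. On each such gap interval, $f$ is HK $\Delta$-integrable (by Theorem~\ref{th4} applied repeatedly, the integral is additive over adjacent subintervals), so for a freely chosen error-controlling $\varphi_j$ I can pick a $\Delta$-gauge $\delta_j$, finer than the restriction of $\delta$, and a $\delta_j$-fine HK partition $\mathcal{D}_j$ of $[c_{j-1},c_j]_{\mathbb{T}}$ with $|S(f,\mathcal{D}_j,\delta_j)-\int_{c_{j-1}}^{c_j} f|<\bigvee_i b^{(j)}_{i,\varphi_j(i)}$ for a suitable $(D)$-sequence. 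The crucial step is then to glue: $\mathcal{D}'\cup\mathcal{D}_1\cup\dots\cup\mathcal{D}_r$ is a genuine $\delta$-fine (full) HK partition $\mathcal{D}$ of $[a,b]_{\mathbb{T}}$, for which the global estimate from the first half applies.

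From there the computation is a telescoping of the triangle inequality. Writing
\[
S(f,\mathcal{D}',\delta)-\sum_{k=1}^m\int_{t_{k-1}}^{t_k} f
=\Big(S(f,\mathcal{D},\delta)-\int_a^b f\Big)-\sum_{j=1}^r\Big(S(f,\mathcal{D}_j,\delta_j)-\int_{c_{j-1}}^{c_j} f\Big),
\]
I would bound the left side by $\bigvee_i a_{i,\varphi(i)}+\sum_{j=1}^r \bigvee_i b^{(j)}_{i,\varphi_j(i)}$, then absorb the whole right-hand side into a single $(D)$-sequence $\bigvee_i a_{i,\varphi(i)}$ (after relabelling), using that a finite sum of $\bigvee$-terms of $(D)$-sequences is dominated by the $\bigvee$-term of one $(D)$-sequence, exactly as the earlier theorems repeatedly invoke. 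Because the gap integrals $\int_{c_{j-1}}^{c_j} f$ cancel against the full integral $\int_a^b f$ minus $\sum_k\int_{t_{k-1}}^{t_k} f$, the identity closes. The strictness of the original inequality relaxes to $\le$ precisely because I must pass to the infimum over the auxiliary $\varphi_j$ governing the gap partitions.

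The main obstacle I anticipate is bookkeeping rather than conceptual: I must choose the gap gauges $\delta_j$ compatibly with $\delta$ so that the union really is $\delta$-fine, and I must handle the time-scale subtleties at the endpoints $c_j$ of the gaps---in particular whether each $c_j$ is right-dense or right-scattered, mirroring the case split ($\xi=c=\rho(c)$ versus $\xi=c>\rho(c)$) already seen in the proof of Theorem~\ref{th4}. Ensuring the tag points and the gauge condition $\delta_R(\xi)\ge\mu(\xi)$ remain consistent across the seam points is the delicate part; the Riesz-space estimates themselves are routine given weak $\sigma$-distributivity and the finite-sum domination of $(D)$-sequences.
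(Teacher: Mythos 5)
Your proposal is correct and follows essentially the same route as the paper's own proof: express the complement of the partial partition as finitely many closed gap intervals, invoke Theorem~\ref{th4} for integrability on the gaps, choose sufficiently fine gap partitions, glue everything into a full $\delta$-fine partition, and close with the triangle inequality plus absorption of the finitely many gap regulators into a single $(D)$-sequence. Your handling of the final relaxation from $<$ to $\leq$ (infimizing over the auxiliary choices on the gaps, via weak $\sigma$-distributivity) is in fact stated more carefully than in the paper, which compresses that step.
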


\begin{proof}
Assume $\mathcal{D}^{'}={\{([t_{k-1},t_{k}]_{\mathbb{T}},\xi_{k})\}}^{m}_{k=1}$
is an arbitrary $\delta'$-fine partial HK partition of $[a,b]_{\mathbb{T}}$. Then the
complement $[a,b]_{\mathbb{T}}\backslash\bigcup^{m}_{k=1} [t_{k-1},k_{i}]_{\mathbb{T}}$
can be expressed as a fine collection of closed subintervals and we denote
$$
[a,b]_{\mathbb{T}}\backslash\bigcup^{m}_{k=1} [t_{k-1},t_{k}]_{\mathbb{T}}
=\sum_{k=1}^{n}[t^{'}_{k-1},t^{'}_{k}]_{\mathbb{T}}.
$$
From Theorem~\ref{th4}, we know that $\int_{t^{'}_{k-1}}^{t^{'}_{k}}f(t)\Delta t$ exists.
Then, there exist $(D)$-sequences $(b_{k,i,j})_{k,i,j}$ of elements of $X$ such that for every
$\varphi \in\mathbb{N}^{\mathbb{N}}$ there exists $\Delta$-gauges,
$\delta_{1},\delta_{2},\ldots,\delta_{n}$, for $[a,b]_{\mathbb{T}}$, such that
$$
\left|S(f,\mathcal{D}_{k},\delta_{k})
-\int_{t^{'}_{k-1}}^{t^{'}_{k}}f(t)\Delta t\right|
<\bigvee_{i=1}^{\infty}b_{k,i,\varphi(i)}
$$
for each $\delta_{k}$-fine HK partition $\mathcal{D}_{k}$
of $[t_{k-1},t_{k}]_{\mathbb{T}}$. Assume that  $\delta\leq \delta',
\delta_{1},\delta_{2},\ldots,\delta_{n}$. Let
$$
\mathcal{D}_{0}=\mathcal{D}^{'}+\mathcal{D}_{1}+\mathcal{D}_{2}
+\cdots+\mathcal{D}_{n}.
$$
Obviously, $\mathcal{D}_{0}$ is a $\delta$-fine
HK partition of $[a,b]_{\mathbb{T}}$. Then, there exists a $(D)$-sequence
$(a_{i,j})_{i,j}$ of elements of $X$ such that
\begin{equation*}
\begin{split}
\left|S(f,\mathcal{D}_{0},\delta)-\int_{a}^{b}f(t)\Delta t\right|
&=\left|S(f,\mathcal{D}^{'},\delta)+\sum^{n}_{k=1}
S(f,\mathcal{D}_{k},\delta)-\int_{a}^{b}f(t)\Delta t\right|<\bigvee_{i=1}^{\infty}a_{i,\varphi(i)}
\end{split}
\end{equation*}
for every $\varphi \in\mathbb{N}^{\mathbb{N}}$.
Consequently, we obtain
\begin{eqnarray*}
\Bigg|S(f,\mathcal{D}^{'},\delta)-\sum_{k=1}^{m}
\int_{t_{k-1}}^{t_{k}}f(t)\Delta t\Bigg|&=&\left|S(f,\mathcal{D}_{0},\delta)-\sum^{n}_{k=1}
S(f,\mathcal{D}_{k},\delta)-\left(\int_{a}^{b}f(t)\Delta t-\sum^{n}_{k=1}
\int_{t^{'}_{k-1}}^{t^{'}_{k}}f(t)\Delta t\right)\right|\\
&\leq&\left|S(f,\mathcal{D}_{0},\delta)-\int_{a}^{b}f(t)\Delta t\right|
+ \sum^{n}_{k=1} \left|S(f,\mathcal{D}_{k},\delta)
-\int_{t^{'}_{k-1}}^{t^{'}_{k}}f(t)\Delta t\right|\\
&< &\bigvee_{i=1}^{\infty}a_{i,\varphi(i)}
+ \sum^{n}_{k=1}\bigvee_{i=1}^{\infty}b_{k,i,\varphi(i)}\\
&\leq &\bigvee_{i=1}^{\infty}a_{i,\varphi(i)}
+ \sum^{n}_{k=1}\bigvee_{i=1}^{\infty}\sum^{n}_{k=1}b_{k,i,\varphi(i)}\\
&\leq&\bigvee_{i=1}^{\infty}a_{i,\varphi(i)}
+ n\bigvee_{i=1}^{\infty}\sum^{n}_{k=1}b_{k,i,\varphi(i)}.
\end{eqnarray*}
Let $c_{i,j}=n\sum^{n}_{k=1}b_{k,i,\varphi(j)}$. Then,
$(c_{i,j})_{i,j}$ is a $(D)$-sequence and, for every
$\varphi \in\mathbb{N}^{\mathbb{N}}$, we have
$$
\left|S(f,\mathcal{D}^{'},\delta)
-\sum_{k=1}^{m}\int_{t_{k-1}}^{t_{k}}f(t)\Delta t\right|
\leq\bigvee_{i=1}^{\infty}a_{i,\varphi(i)}.
$$
The proof is complete.
\end{proof}


\section{Convergence theorems}
\label{sec:4}

In this section we prove two convergence theorems.
We begin with the following two definitions.

\begin{definition}
\label{def2}
We say that $f_{n}\rightarrow f$ converges with a common regulating
sequence (w.c.r.s.) if there exists a $(D)$-sequence $(a_{i,j})_{i,j}$
of elements of $X$ such that for every $\varphi \in\mathbb{N}^{\mathbb{N}}$
and every $t\in [a,b]_{\mathbb{T}}$ there exists $p=p(t)$ such that
$$
|f_{n}(t)-f(t)|<\bigvee_{i=1}^{\infty}a_{i,\varphi(i)}
$$
for any $n\geq p$.
\end{definition}

\begin{definition}
\label{def3}
We say that $\{f_{n}\}_{n=1}^{\infty}$ is uniformly HK $\Delta$-integrable
on $[a,b]_{\mathbb{T}}$ if each $f_{n}$ is HK $\Delta$-integrable on
$[a,b]_{\mathbb{T}}$ and there exists a $(D)$-sequence $(a_{i,j})_{i,j}$
of elements of $X$ such that for every $\varphi \in\mathbb{N}^{\mathbb{N}}$
there exists a $\Delta$-gauge, $\delta$, for $[a,b]_{\mathbb{T}}$, such that
$$
\left|S(f_{n},\mathcal{D},\delta)
-\int_{a}^{b}f_{n}(t)\Delta t\right|
<\bigvee_{i=1}^{\infty}b_{i,\varphi(i)}
$$
for each $\delta$-fine HK partition $\mathcal{D}$ of
$[a,b]_{\mathbb{T}}$ and $n\in \mathbb{N}$.
\end{definition}

For uniformly HK $\Delta$-integrable sequences of integrable functions,
we have the following convergence theorem.

\begin{theorem}
\label{th5}
Let $\{f_{n}\}_{n=1}^{\infty}$ be a sequence of uniformly HK $\Delta$-integrable
functions on $[a,b]_{\mathbb{T}}$ and assume that $f_{n}\rightarrow f$ converges
with a common regulating sequence. Then $f$ is HK $\Delta$-integrable and
$$
\lim_{n\rightarrow\infty}\int_{a}^{b}f_{n}(t)\Delta t
=\int_{a}^{b}f(t)\Delta t.
$$
\end{theorem}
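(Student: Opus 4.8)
The plan is to establish the integrability of $f$ through the Cauchy--Bolzano criterion (Theorem~\ref{th3}) and then deduce convergence of the integrals by a parallel Riemann-sum estimate. Write $(a_{i,j})_{i,j}$ for the common regulating $(D)$-sequence witnessing $f_n\to f$ w.c.r.s.\ (Definition~\ref{def2}) and $(b_{i,j})_{i,j}$ for the common $(D)$-sequence from uniform integrability (Definition~\ref{def3}). To verify the Cauchy condition, fix $\varphi\in\mathbb{N}^{\mathbb{N}}$, let $\delta$ be finer than the gauge supplied by uniform integrability for this $\varphi$, and take two $\delta$-fine HK partitions $\mathcal{D}_1,\mathcal{D}_2$ of $[a,b]_{\mathbb{T}}$. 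Inserting an auxiliary $f_n$ together with its integral and using the triangle inequality, I split
$$
|S(f,\mathcal{D}_1,\delta)-S(f,\mathcal{D}_2,\delta)|
$$
into two \emph{sampling errors} $|S(f,\mathcal{D}_j,\delta)-S(f_n,\mathcal{D}_j,\delta)|$ and two \emph{integration errors} $|S(f_n,\mathcal{D}_j,\delta)-\int_a^b f_n(t)\Delta t|$, the integrals of $f_n$ cancelling between the two terms.

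Each integration error is bounded by $\bigvee_i b_{i,\varphi(i)}$ via uniform integrability, \emph{uniformly in} $n$. The crux is the sampling errors. Since $\mathcal{D}_1$ and $\mathcal{D}_2$ together carry only finitely many tags, I apply Definition~\ref{def2} at each tag and take $n$ to be the maximum of the finitely many resulting indices $p(\xi)$, so that $|f(\xi)-f_n(\xi)|<\bigvee_i a_{i,\varphi(i)}$ at every tag of both partitions. Then
$$
|S(f,\mathcal{D}_j,\delta)-S(f_n,\mathcal{D}_j,\delta)|
\leq\sum_k|f(\xi_k)-f_n(\xi_k)|\,(t_k-t_{k-1})
\leq (b-a)\bigvee_i a_{i,\varphi(i)}.
$$
Adding the four bounds yields $2(b-a)\bigvee_i a_{i,\varphi(i)}+2\bigvee_i b_{i,\varphi(i)}$, which I dominate by a single $(D)$-sequence $\bigvee_i c_{i,\varphi(i)}$ exactly as in the proof of Theorem~\ref{th2}. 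The Cauchy--Bolzano condition holds, so $f$ is HK $\Delta$-integrable.

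For the convergence of the integrals, I fix $\varphi$ and choose one $\delta$-fine partition $\mathcal{D}$ with $\delta$ finer than the gauges arising from the integrability of $f$ and from uniform integrability. Then
$$
\left|\int_a^b f(t)\Delta t-\int_a^b f_n(t)\Delta t\right|
\leq\left|\int_a^b f-S(f,\mathcal{D},\delta)\right|
+|S(f,\mathcal{D},\delta)-S(f_n,\mathcal{D},\delta)|
+\left|S(f_n,\mathcal{D},\delta)-\int_a^b f_n\right|.
$$
The first term is bounded using the integrability of $f$, the third uniformly in $n$ by Definition~\ref{def3}, and the middle term by Definition~\ref{def2} for every $n$ exceeding the maximum of $p(\xi)$ over the finitely many (now fixed) tags of $\mathcal{D}$. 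Collecting these into a single $(D)$-sequence $(d_{i,j})_{i,j}$, for each $\varphi$ there is $N$ with $|\int_a^b f-\int_a^b f_n|<\bigvee_i d_{i,\varphi(i)}$ whenever $n\geq N$; this is precisely the $(D)$-convergence of $\int_a^b f_n$ to $\int_a^b f$.

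The main obstacle is reconciling the \emph{pointwise}, tag-dependent index $p=p(\xi)$ of w.c.r.s.\ convergence with the requirement that both the gauge $\delta$ and the final regulating $(D)$-sequence be independent of $n$. The resolution I would stress is that $n$ never enters $\delta$ or the final regulator: it is introduced solely as an intermediary once a finite partition is already in hand, so passing to the maximum of $p$ over its finitely many tags does no harm.
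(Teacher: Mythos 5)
Your proposal is correct and follows essentially the same route as the paper: Step 1 verifies the Cauchy--Bolzano condition of Theorem~\ref{th3} by inserting an auxiliary $f_{n}$ (two sampling errors handled via w.c.r.s.\ convergence after the two partitions are fixed, two integration errors bounded uniformly in $n$ via Definition~\ref{def3}), and Step 2 obtains the convergence of the integrals from the same three-term triangle inequality, with all bounds collected into a single dominating $(D)$-sequence as in Theorem~\ref{th2}. Your explicit observation that $n$ enters only after the partitions and their finitely many tags are fixed, so that neither the gauge nor the final regulator depends on $n$, is exactly the point left implicit in the paper's choice of the thresholds $p_{1}$, $p_{2}$ and $p$.
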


\begin{proof}
We will prove the Theorem in two steps. Step 1.
By assumption, there exists a $(D)$-sequence $(b_{i,j})_{i,j}$
of elements of $X$ such that for every $\varphi \in\mathbb{N}^{\mathbb{N}}$
there exist two $\Delta$-gauges, $\delta_{1}$ and $\delta_{2}$, for $[a,b]_{\mathbb{T}}$,
such that
$$
\left|S(f_{n},\mathcal{D}_{1},\delta_{1})
-\int_{a}^{b}f_{n}(t)\Delta t\right|
<\bigvee_{i=1}^{\infty}b_{i,\varphi(i+1)},
\quad
\left|S(f_{n},\mathcal{D}_{2},\delta_{2})
-\int_{a}^{b}f_{n}(t)\Delta t\right|
<\bigvee_{i=1}^{\infty}b_{i,\varphi(i+2)}
$$
for each $\delta_{1}$-fine ($\delta_{2}$-fine)
HK partition $\mathcal{D}_{1}=\{[t_{i-1},t_{i}],\xi_{i}\}_{i}$
($\mathcal{D}_{2}=\{[t'_{i-1},t'_{i}],\xi'_{i}\}_{i}$)
of $[a,b]_{\mathbb{T}}$ and $n\in \mathbb{N}$.
Let $\delta=\min\left\{\delta_{1},\delta_{2}\right\}$.
By the w.c.r.s. convergence,
$$
\left|S(f,\mathcal{D}_{1},\delta)-S(f_{n},\mathcal{D}_{1},\delta)\right|
\leq\sum_{i}|f(\xi_{i})-f_{n}(\xi_{i})|(t_{i}-t_{i-1})
<(b-a)\bigvee_{i=1}^{\infty}a_{i,\varphi(i+1)}
$$
for each $\delta$-fine HK partition $\mathcal{D}_{1}$ and $n\geq p_{1}$.
Similarly, we have
$$
\left|S(f,\mathcal{D}_{2},\delta)-S(f_{n},\mathcal{D}_{2},\delta)\right|
\leq\sum_{i}|f(\xi'_{i})-f_{n}(\xi'_{i})|(t'_{i}-t'_{i-1})
<(b-a)\bigvee_{i=1}^{\infty}a_{i,\varphi(i+2)}
$$
for each $\delta$-fine HK partition $\mathcal{D}_{2}$ and $n\geq p_{2}$.
We can choose a $(D)$-sequence $(c_{i,j})_{i,j}$ of elements of $X$ such that
$$
\bigvee_{i=1}^{\infty}b_{i,\varphi(i+1)}
+\bigvee_{i=1}^{\infty}b_{i,\varphi(i+2)}
+(b-a)\bigvee_{i=1}^{\infty}a_{i,\varphi(i+1)}
+(b-a)\bigvee_{i=1}^{\infty}a_{i,\varphi(i+2)}
\leq\bigvee_{i=1}^{\infty}c_{i,\varphi(i)}.
$$
Let $n>\max\{p_{1},p_{2}\}$. Then,
\begin{eqnarray*}
\left|S(f,\mathcal{D}_{1},\delta)-S(f,\mathcal{D}_{2},\delta)\right|
&=&\left|S(f,\mathcal{D}_{1},\delta)-S(f_{n},\mathcal{D}_{1},\delta)\right|
+\left|S(f_{n},\mathcal{D}_{1},\delta)-\int_{a}^{b}f_{n}(t)\Delta t\right|\\
&&+\left|\int_{a}^{b}f_{n}(t)\Delta t
-S(f_{n},\mathcal{D}_{2},\delta)\right|
+\left|S(f_{n},\mathcal{D}_{2},\delta)
-S(f,\mathcal{D}_{2},\delta)\right|\\
&<& \bigvee_{i=1}^{\infty}c_{i,\varphi(i)}
\end{eqnarray*}
for each $\delta$-fine HK partition $\mathcal{D}_{1}$
and $\mathcal{D}_{2}$ of $[a,b]_{\mathbb{T}}$. Therefore,
by Theorem~\ref{th3}, $f$ is HK $\Delta$-integrable.\\
Step 2. Since $f$ is HK $\Delta$-integrable, there exists
a $(D)$-sequence $(c_{i,j})_{i,j}$ of elements of $X$ such that for every
$\varphi \in\mathbb{N}^{\mathbb{N}}$ there exists a $\Delta$-gauge,
$\delta'$, for $[a,b]_{\mathbb{T}}$, such that
$$
\left|S(f,\mathcal{D}',\delta')
-\int_{a}^{b}f(t)\Delta t\right|
<\bigvee_{i=1}^{\infty}c_{i,\varphi(i+1)}
$$
for each $\delta'$-fine HK partition $\mathcal{D}'$ of
$[a,b]_{\mathbb{T}}$. By the uniform HK $\Delta$-integrability,
there exists a $(D)$-sequence $(e_{i,j})_{i,j}$ of elements
of $X$ for every $\varphi \in\mathbb{N}^{\mathbb{N}}$ such that
$$
\left|S(f_{n},\mathcal{D}',\delta')
-\int_{a}^{b}f_{n}(t)\Delta t\right|
<\bigvee_{i=1}^{\infty}b_{i,\varphi(i+3)}
$$
for each $\delta'$-fine HK partition $\mathcal{D}'$
of $[a,b]_{\mathbb{T}}$ and $n\in \mathbb{N}$.
By the w.c.r.s. convergence,
$$
\left|S(f,\mathcal{D}',\delta')-S(f_{n},\mathcal{D}',\delta')\right|
<(b-a)\bigvee_{i=1}^{\infty}a_{i,\varphi(i+3)}
$$
for each $\delta'$-fine HK partition $\mathcal{D}'$ and $n\geq p$.
Choose a $(D)$-sequence $(d_{i,j})_{i,j}$ of elements of $X$ such that
$$
\bigvee_{i=1}^{\infty}c_{i,\varphi(i+1)}
+\bigvee_{i=1}^{\infty}b_{i,\varphi(i+3)}
+(b-a)\bigvee_{i=1}^{\infty}a_{i,\varphi(i+3)}
\leq\bigvee_{i=1}^{\infty}d_{i,\varphi(i)}.
$$
Then,
\begin{eqnarray*}
\left|\int_{a}^{b}f(t)\Delta t-\int_{a}^{b}f_{n}(t)\Delta t \right|
&=&\left|\int_{a}^{b}f(t)\Delta t-S(f,\mathcal{D}',\delta')\right|
+\left|S(f,\mathcal{D}',\delta')-S(f_{n},\mathcal{D}',\delta')\right|\\
&&+\left|S(f_{n},\mathcal{D}',\delta')-\int_{a}^{b}f_{n}(t)\Delta t\right|\\
&<& \bigvee_{i=1}^{\infty}d_{i,\varphi(i)}
\end{eqnarray*}
for each $\delta'$-fine HK partition $\mathcal{D}'$ of $[a,b]_{\mathbb{T}}$.
It follows that
$
\lim_{n\rightarrow\infty}\int_{a}^{b}f_{n}(t)\Delta t=\int_{a}^{b}f(t)\Delta t
$
and the proof is complete.
\end{proof}

We now recall the well-known Fremlin lemma.

\begin{lemma}[See \cite{F}]
\label{lem2}
Let $\{(a^{n}_{i,j})_{i,j}:n\in \mathbb{N}\}$ be any countable family of regulators.
Then, for each fixed element $x\in X$, $x\geq0$, there exists a $(D)$-sequence
$(a_{i,j})_{i,j}$ of elements of $X$ such that
$$
x\bigwedge \sum_{i=1}^{\infty}\left(\bigvee_{i=1}^{\infty}
a^{n}_{i,\varphi(i)+n}\right)
\leq \bigvee_{i=1}^{\infty}a_{i,\varphi(i)}
$$
for every $\varphi \in\mathbb{N}^{\mathbb{N}}$.
\end{lemma}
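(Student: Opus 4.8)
The plan is to read this as a purely lattice-theoretic statement about regulators, detached from the time-scale and integration machinery of the earlier sections: everything takes place in the Dedekind complete, weakly $\sigma$-distributive space $X$ and only involves manipulating $(o)$- and $(D)$-sequences. The goal is to manufacture a single $(D)$-sequence $(a_{i,j})_{i,j}$ out of the countable family $\{(a^{n}_{i,j})_{i,j}\}$ so that, after truncation by $x$, the shifted double series $\sum_{n}\bigvee_{i}a^{n}_{i,\varphi(i)+n}$ on the left is dominated by $\bigvee_{i}a_{i,\varphi(i)}$ for every $\varphi\in\mathbb{N}^{\mathbb{N}}$ simultaneously.

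First I would normalise the data. Replacing each $a^{n}_{i,j}$ by $x\wedge a^{n}_{i,j}$, one may assume $0\le a^{n}_{i,j}\le x$ for all $n,i,j$; this preserves the regulator property, since $\wedge$ is order continuous in a Dedekind complete space, whence $\bigwedge_{j}(x\wedge a^{n}_{i,j})=x\wedge\bigwedge_{j}a^{n}_{i,j}=0$ for each fixed $i$, and it leaves the left-hand side unchanged because the meet with $x$ is already present. With this bound in force, the suprema $\bigvee_{i}a^{n}_{i,\varphi(i)+n}$ and the partial sums of the series exist and are $\le x$, so the order sum over $n$ is a genuine, well-defined supremum of increasing partial sums in $X$.

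Next I would construct $(a_{i,j})_{i,j}$ by a diagonal procedure that combines the $n$-th regulator only with the $n$-shifted tail of its column index; the role of the shift $\varphi(i)+n$ is precisely to push the $n$-th contribution far enough out along its own $(o)$-sequence that the combined rows still decrease to $0$. Concretely, one takes $a_{i,j}$ to be the truncation by $x$ of an order sum of the entries $a^{n}_{\cdot,\cdot}$, arranged so that for each fixed $i$ the sequence $(a_{i,j})_{j}$ is monotone decreasing. I expect the verification that each row is genuinely an $(o)$-sequence, i.e. that $\bigwedge_{j}a_{i,j}=0$, to be the main obstacle, rather than the final inequality: a plain sum of regulators need not be a regulator, and convergence of each row to $0$ is forced only by the simultaneous use of the shift (which supplies decay in $n$) and the truncation by $x$ (which keeps everything bounded). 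Making precise the interchange of $\bigvee_{i}$ against the order sum $\sum_{n}$ at this step is the delicate point.

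Finally, once $(a_{i,j})_{i,j}$ is known to be a $(D)$-sequence, I would establish the domination. Fixing $\varphi$, I would distribute the supremum over $i$ through the order sum over $n$ — legitimate because addition is order continuous and all terms are nonnegative — to compare $\sum_{n}\bigvee_{i}a^{n}_{i,\varphi(i)+n}$ with $\bigvee_{i}a_{i,\varphi(i)}$, and then intersect with $x$. Weak $\sigma$-distributivity of $X$ is what guarantees that the resulting regulator is not vacuous, namely that $\bigwedge_{\varphi}\bigvee_{i}a_{i,\varphi(i)}=0$, as is implicit in calling $(a_{i,j})_{i,j}$ a regulator at all. Since the statement is classical — it is Fremlin's lemma, quoted here from \cite{F} — one may, if preferred, simply invoke that reference; but the diagonal construction just outlined is the natural self-contained route, with the regulator check as its crux.
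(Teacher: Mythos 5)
The first thing to note is that the paper does not prove this lemma at all: it is stated as a known result with the citation ``See \cite{F}'' (Fremlin's 1975 paper) and is then used as a black box in the proof of the Monotone Convergence Theorem~\ref{th6}. So the only part of your proposal that coincides with the paper's treatment is your closing remark that one may ``simply invoke that reference'' --- that is exactly, and only, what the authors do.

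Judged as a self-contained argument, your sketch has a genuine gap at precisely the point you yourself flag as the crux. You never actually define the double sequence $(a_{i,j})_{i,j}$: ``the truncation by $x$ of an order sum of the entries $a^{n}_{\cdot,\cdot}$, arranged so that for each fixed $i$ the sequence $(a_{i,j})_{j}$ is monotone decreasing'' is a wish-list of properties, not a construction, and the two assertions that carry all the content of Fremlin's lemma --- that each row of the constructed sequence satisfies $\bigwedge_{j}a_{i,j}=0$, and that for each fixed $\varphi$ the domination survives the interchange of $\bigvee_{i}$ with the order sum $\sum_{n}$ --- are exactly the steps you defer (``the main obstacle'', ``the delicate point''). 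A proof whose central object is unspecified and whose key verification is postponed is a plan, not a proof; to complete it you would essentially have to reproduce Fremlin's diagonal argument from \cite{F}. Two smaller inaccuracies are worth correcting. First, weak $\sigma$-distributivity plays no role in this lemma: it holds in every Dedekind complete Riesz space, and $\sigma$-distributivity only enters later, when regulators are collapsed via $\bigwedge_{\varphi}$. Second, being a regulator does not ``implicitly'' require $\bigwedge_{\varphi}\bigvee_{i}a_{i,\varphi(i)}=0$; a regulator only requires each row to be an $(o)$-sequence, and the vanishing of that infimum is the \emph{conclusion} weak $\sigma$-distributivity provides, not part of the definition. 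Your normalization step, by contrast, is correct, but it silently uses the inequality $x\wedge(b+c)\le(x\wedge b)+(x\wedge c)$ for positive $b,c$ (applied to partial sums and passed to the order limit); if you write this up, that should be made explicit.
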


\begin{theorem}[Monotone Convergence Theorem]
\label{th6}
Let $\{f_{n}\}_{n=1}^{\infty}$ be a sequence of HK $\Delta$-integrable
functions on $[a,b]_{\mathbb{T}}$, and $f_{1}$ be bounded from below.
Let $f:[a,b]_{\mathbb{T}}\rightarrow X$ be a bounded function such that
$f_{n}\leq f_{n+1}$ and $f_{n}\rightarrow f$ converges with a common
regulating sequence. Then $f$ is HK $\Delta$-integrable and
$$
\lim_{n\rightarrow\infty}\int_{a}^{b}f_{n}(t)\Delta t=\int_{a}^{b}f(t)\Delta t.
$$
\end{theorem}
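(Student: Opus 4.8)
The plan is to deduce the theorem from the uniform convergence result, Theorem~\ref{th5}. Since the hypothesis $f_n\to f$ w.c.r.s. is already assumed in Theorem~\ref{th6}, the only missing ingredient of Theorem~\ref{th5} is that the monotone family $\{f_n\}$ be \emph{uniformly} HK $\Delta$-integrable in the sense of Definition~\ref{def3}. Once that is established, Theorem~\ref{th5} yields at a stroke both the HK $\Delta$-integrability of $f$ and the desired limit $\lim_n\int_a^b f_n(t)\Delta t=\int_a^b f(t)\Delta t$. So the entire proof reduces to verifying uniform integrability of the increasing sequence.

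First I would record the monotonicity of the integral. From Definition~\ref{def1} and weak $\sigma$-distributivity one checks that a nonnegative HK $\Delta$-integrable function has nonnegative integral: if $g\geq 0$ then $S(g,\mathcal{D},\delta)\geq 0$ for every $\delta$-fine partition, so $\int_a^b g(t)\Delta t\geq -\bigvee_i a_{i,\varphi(i)}$ for its own regulator and all $\varphi$, whence $\int_a^b g(t)\Delta t\geq 0$. Applying this to $f_{n+1}-f_n\geq 0$, which is integrable by Theorem~\ref{th2}, gives $\int_a^b f_n(t)\Delta t\leq\int_a^b f_{n+1}(t)\Delta t$. The inequalities $f_1\leq f_n\leq f$ together with $f_1$ bounded below and $f$ bounded show that the increasing sequence $\left(\int_a^b f_n(t)\Delta t\right)_n$ is bounded above in $X$ (dominate $f_n$ by a constant function, whose integral is explicit). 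By Dedekind completeness it therefore has a supremum $A\in X$, and the differences $A-\int_a^b f_n(t)\Delta t$ form an $(o)$-sequence, a fact that will feed the regulator estimates.

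The core of the argument is to manufacture a single $\Delta$-gauge and a single $(D)$-sequence controlling $\left|S(f_n,\mathcal{D},\delta)-\int_a^b f_n(t)\Delta t\right|$ \emph{simultaneously for all} $n$. For each $m$ let $(a^{m}_{i,j})_{i,j}$ be the regulator witnessing integrability of $f_m$, with gauge $\gamma_m$ attached to each $\varphi$. Using the w.c.r.s. convergence I would assign to each point $\xi$ an index $p(\xi)$ beyond which $|f_k(\xi)-f(\xi)|$ is dominated by the common regulator, and then define the diagonal gauge $\delta(\xi)=\gamma_{p(\xi)}(\xi)$, refined where necessary so that it remains a $\Delta$-gauge. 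Given a $\delta$-fine partition $\mathcal{D}$ and a fixed $n$, I would group the interval--point pairs according to the value of $p(\xi_k)$; on each group the pairs constitute a $\delta$-fine \emph{partial} HK partition subordinate to some $\gamma_{m}$, so the Saks--Henstock Lemma (Lemma~\ref{lem1}) estimates the partial Riemann sum of $f_m$ against its partial integral by $\bigvee_i a^{m}_{i,\varphi(i)}$. Passing between $f_n$ and $f_m$ on each group through monotonicity and the $(o)$-sequence $\left(A-\int_a^b f_n(t)\Delta t\right)_n$, and invoking the w.c.r.s. bound at the tags, leaves a countable sum of regulators, which the Fremlin Lemma (Lemma~\ref{lem2}) amalgamates into the single $(D)$-sequence demanded by Definition~\ref{def3}.

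The main obstacle is exactly this diagonalisation. Because the chosen index $p(\xi_k)$ varies from tag to tag, one $\delta$-fine partition simultaneously involves infinitely many of the individual gauges $\gamma_m$, and the naive bound is an \emph{infinite} sum of regulators rather than a single regulator; reconciling this is precisely why the Fremlin Lemma is recalled immediately before the theorem. The delicate bookkeeping is to arrange the tag-dependent indices and the shifts $\varphi(i)+m$ so that the hypotheses of both the Saks--Henstock Lemma and Fremlin's Lemma are met uniformly in $n$. After uniform integrability is in hand, the conclusion follows mechanically from Theorem~\ref{th5}.
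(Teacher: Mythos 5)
Your overall strategy coincides with the paper's: establish that $\{f_n\}$ is uniformly HK $\Delta$-integrable in the sense of Definition~\ref{def3} by means of a diagonal gauge built from the indices $p(\xi)$ of the w.c.r.s.\ convergence, estimate group by group with the Saks--Henstock Lemma, amalgamate countably many regulators via Fremlin's Lemma (boundedness supplying the element $x=(b-a)(L-l)$ for the meet), and then invoke Theorem~\ref{th5}. However, your core estimate has a genuine gap on the groups whose tag-index exceeds the current index: if $p(\xi_k)=m>n$, then Definition~\ref{def2} gives no regulator control on $f_n(\xi_k)$, because the w.c.r.s.\ bound $|f_k(\xi)-f(\xi)|<\bigvee_i a_{i,\varphi(i)}$ holds only for $k\geq p(\xi)$; it applies to $f_m$ at those tags but not to $f_n$. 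Monotonicity yields only $0\leq f_m(\xi_k)-f_n(\xi_k)\leq L-l$, a fixed element rather than a regulator, and the $(o)$-sequence $\left(A-\int_a^b f_n(t)\Delta t\right)_n$ cannot substitute: Definition~\ref{def3} demands a single regulator bound valid for \emph{every} $n$, whereas for each fixed $n$ the element $A-\int_a^b f_n(t)\Delta t$ is not dominated by any regulator (only its limit in $n$ vanishes). So ``passing between $f_n$ and $f_m$ on each group'' does not leave a countable sum of regulators; on the groups with $m>n$ it leaves terms of size $L-l$, which Fremlin's Lemma cannot absorb.

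The paper's proof is organized precisely to avoid this. First, its gauge is the nested minimum $\delta(\xi)=\min\{\delta_1(\xi),\ldots,\delta_{p(\xi)}(\xi)\}$, not the single gauge $\delta_{p(\xi)}(\xi)$; consequently the sub-collection of pairs with $p(\xi_k)\geq n$ is automatically $\delta_n$-fine. Second, on that sub-collection it never compares $f_n$ with $f_{p(\xi_k)}$: it applies the Saks--Henstock Lemma directly to $f_n$ itself, with $f_n$'s own regulator $(a_{n,i,j})_{i,j}$. Only on tags with $p(\xi_k)=m<n$ does it exchange $f_n(\xi_k)$ for $f_m(\xi_k)$, and there the exchange is legitimate, since $n\geq p(\xi_k)$ places both values within the common regulator of $f(\xi_k)$; this produces the term $2(b-a)a_{i,j}$ and the finite sum $\sum_{m=1}^{n-1}$ of shifted regulators that Fremlin's Lemma then handles. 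Note that your gauge $\delta(\xi)=\gamma_{p(\xi)}(\xi)$ does not even permit this repair, since with it the pairs having $p(\xi_k)\geq n$ need not be $\gamma_n$-fine; the minimum over all indices up to $p(\xi)$ is essential. (Your preliminary step constructing $A=\bigvee_n\int_a^b f_n(t)\Delta t$ is harmless but unnecessary: once uniform integrability is proved, Theorem~\ref{th5} already delivers both the integrability of $f$ and the limit of the integrals.)
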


\begin{proof}
Since $f_{n}$ are HK $\Delta$-integrable functions on $[a,b]_{\mathbb{T}}$,
there exists a $(D)$-sequence $(a_{n,i,j})_{i,j}$ of elements of $X$
such that for every $\varphi \in\mathbb{N}^{\mathbb{N}}$ there exists
a $\Delta$-gauge, $\delta_{n}$, for $[a,b]_{\mathbb{T}}$, such that
$$
\left|S(f_{n},\mathcal{D}_{n},\delta_{n})
-\int_{a}^{b}f_{n}(t)\Delta t\right|
<\bigvee_{i=1}^{\infty}a_{n,i,\varphi(i+n+1)}
$$
for each $\delta_{n}$-fine HK partition
$\mathcal{D}_{n}$ of $[a,b]_{\mathbb{T}}$.
By the w.c.r.s. convergence, there exists a $(D)$-sequence
$(a_{i,j})_{i,j}$ of elements of $X$ such that for every
$\varphi \in\mathbb{N}^{\mathbb{N}}$ and every
$t\in [a,b]_{\mathbb{T}}$ there exists $p=p(t)$ such that
$$
|f_{n}(t)-f(t)|<\bigvee_{i=1}^{\infty}a_{i,\varphi(i+1)}
$$
for any $n\geq p$. Let
$b_{1,i,j}=2(b-a)a_{i,j}$,
$b_{m,i,j}=a_{m-1,i,j}$, $m=2,3,\ldots$,
and $x=(b-a)(L-l)$, where $L,l\in X$ are such that
$l\leq f_{1}(\xi)\leq f(\xi)\leq L$ for any
$\xi\in [a,b]_{\mathbb{T}}$. By Fremlin's Lemma~\ref{lem2},
there exists a $(D)$-sequence $(b_{i,j})_{i,j}$
of elements of $X$ such that for every
$\varphi \in\mathbb{N}^{\mathbb{N}}$
$$
x\bigwedge\left(\sum_{m=1}^{\infty}\bigvee_{i=1}^{\infty}
b_{m,i,\varphi(i+m)}\right)
\leq\bigvee_{i=1}^{\infty}b_{i,\varphi(i)}.
$$
Let $\varphi \in\mathbb{N}^{\mathbb{N}}$ and
\begin{gather*}
\delta(\xi)=\min\{\delta_{1}(\xi),\delta_{2}(\xi),\ldots,\delta_{p(t)}(\xi)\},\\ \mathcal{D}^{0}=\mathcal{D}^{1}\bigcup\mathcal{D}^{2}, 
\end{gather*}
where
\begin{equation*}
\begin{split}
\mathcal{D}^{1}&=\{([t_{k-1},t_{k}]_{\mathbb{T}},\xi_{k})
\in\mathcal{D}\mid p(\xi_{k})\geq n\},\\
\mathcal{D}^{2}&=\bigcup_{p(\xi_{k})< n}\mathcal{D}_{k},
\end{split}
\end{equation*}
with $\mathcal{D}_{k}$ a sufficiently fine partition of $[t_{k-1},t_{k}]_{\mathbb{T}}$
such that $\mathcal{D}^{0}$ is $\delta_{n}$-fine. Thanks to Henstock's Lemma~\ref{lem1}, we have
$$
\left|\sum_{p(\xi_{k})\geq n}f_{n}(\xi_{k})(t_{k}-t_{k-1})
-\sum_{p(\xi_{k})\geq n}\int_{t_{k-1}}^{t_{k}}f_{n}(t)\Delta t\right|
\leq\bigvee_{i=1}^{\infty}a_{n,i,\varphi(i+n+1)}.
$$
Consequently, we obtain
\begin{equation*}
\begin{split}
\Bigg|S&(f_{n},\mathcal{D}_{n},\delta)-\int_{a}^{b}f_{n}(t)\Delta t\Bigg|\\
&\leq\left|\sum_{p(\xi_{k})\geq n}f_{n}(\xi_{k})(t_{k}-t_{k-1})
-\sum_{p(\xi_{k})\geq n}\int_{t_{k-1}}^{t_{k}}f_{n}(t)\Delta t\right|\\
&\quad +\left|\sum_{p(\xi_{k})< n}f_{n}(\xi_{k})(t_{k}-t_{k-1})
-\sum_{p(\xi_{k})< n}\int_{t_{k-1}}^{t_{k}}f_{n}(t)\Delta t\right|\\
&\leq\bigvee_{i=1}^{\infty}a_{n,i,\varphi(i+n+1)}
+\left|\sum_{m=1}^{n-1}\sum_{p(\xi_{k})=m}f_{n}(\xi_{k})(t_{k}-t_{k-1})
-\sum_{m=1}^{n-1}\sum_{p(\xi_{k})=m}\int_{t_{k-1}}^{t_{k}}f_{n}(t)\Delta t\right|\\
&\leq \bigvee_{i=1}^{\infty}a_{n,i,\varphi(i+n+1)}
+\left|\sum_{m=1}^{n-1}\sum_{p(\xi_{k})=m}f_{n}(\xi_{k})(t_{k}-t_{k-1})
-\sum_{m=1}^{n-1}\sum_{p(\xi_{k})=m}\int_{t_{k-1}}^{t_{k}}f_{\xi_{k}}(t)\Delta t\right|\\
&\leq \bigvee_{i=1}^{\infty}a_{n,i,\varphi(i+n+1)}
+\left|\sum_{m=1}^{n-1}\sum_{p(\xi_{k})=m}f_{n}(\xi_{k})(t_{k}-t_{k-1})
-\sum_{m=1}^{n-1}\sum_{p(\xi_{k})=m}f_{p(\xi_{k})}(\xi_{k})(t_{k}-t_{k-1})\right|\\
&\quad +\left|\sum_{m=1}^{n-1}\sum_{p(\xi_{k})=m}
f_{p(\xi_{k})}(\xi_{k})(t_{k}-t_{k-1})-\sum_{m=1}^{n-1}
\sum_{p(\xi_{k})=m}\int_{t_{k-1}}^{t_{k}}f_{\xi_{k}}(t)\Delta t\right|\\
&\leq \bigvee_{i=1}^{\infty}a_{n,i,\varphi(i+n+1)}
+\sum_{m=1}^{n-1}\sum_{p(\xi_{k})=m}
\left|f_{n}(\xi_{k})-f_{p(\xi_{k})}(\xi_{k})\right|(t_{k}-t_{k-1})\\
&\quad +\sum_{m=1}^{n-1}\left|\sum_{p(\xi_{k})=m}f_{p(\xi_{k})}(\xi_{k})(t_{k}-t_{k-1})
-\sum_{p(\xi_{k})=m}\int_{t_{k-1}}^{t_{k}}f_{\xi_{k}}(t)\Delta t\right|\\
&<\bigvee_{i=1}^{\infty}a_{n,i,\varphi(i+n+1)}
+2(b-a)\bigvee_{i=1}^{\infty}a_{i,\varphi(i+1)}
+\sum_{m=1}^{n-1}\bigvee_{i=1}^{\infty}a_{m,i,\varphi(i+m+1)}\\
&<\bigvee_{i=1}^{\infty}b_{1,i,\varphi(i+n+1)}
+\sum_{m=1}^{n-1}\bigvee_{i=1}^{\infty}a_{m,i,\varphi(i+m+1)}\\
&<\bigvee_{i=1}^{\infty}b_{1,i,\varphi(i+n+1)}
+\sum_{m=2}^{n-1}\bigvee_{i=1}^{\infty}b_{m,i,\varphi(i+m+1)}\\
&<\sum_{m=1}^{\infty}\bigvee_{i=1}^{\infty}b_{m,i,\varphi(i+m)}.
\end{split}
\end{equation*}
On the other hand, we have
$$
\left|S(f_{n},\mathcal{D},\delta)
-\int_{a}^{b}f_{n}(t)\Delta t\right|
\leq (L-l)(b-a)=x.
$$
Then,
$$
\left|S(f_{n},\mathcal{D},\delta)-\int_{a}^{b}f_{n}(t)\Delta t\right|
\leq x\bigwedge\Big(\sum_{m=1}^{\infty}\bigvee_{i=1}^{\infty}
b_{m,i,\varphi(i+m)}\Big)\leq\bigvee_{i=1}^{\infty}b_{i,\varphi(i)}.
$$
Now, we prove that $\{f_{n}\}_{n=1}^{\infty}$ is a sequence of uniformly
HK $\Delta$-integrable functions on $[a,b]_{\mathbb{T}}$. By Theorem~\ref{th5},
$f$ is HK $\Delta$-integrable and
$$
\lim_{n\rightarrow\infty}\int_{a}^{b}f_{n}(t)\Delta t
=\int_{a}^{b}f(t)\Delta t.
$$
The proof is complete.
\end{proof}


\section*{Acknowledgements}

This research is supported by Educational Commission
of Hubei Province, grant no. B2016160 (You and Zhao)
and by FCT and CIDMA within project UID/MAT/04106/2013 (Torres). 
The authors are grateful to one anonymous referee 
for valuable comments and suggestions.



\end{document}